\title{The Crepant Resolution Conjecture for Type $A$ Surface Singularities}
\author[Coates]{Tom Coates}
\address{Department of Mathematics\\
Imperial College London\\
180 Queen's Gate\\
London SW7 2AZ 
\\UK}
\email{t.coates@imperial.ac.uk}
\author[Corti]{Alessio Corti}
\address{Department of Mathematics\\
Imperial College London\\
180 Queen's Gate\\
London SW7 2AZ\\
UK}
\email{a.corti@imperial.ac.uk}
\author[Iritani]{Hiroshi Iritani}
\address{Faculty of Mathematics\\
Kyushu University\\
6-10-1, Hakozaki \\
Higashiku, Fukuoka, 812-8581 \\
Japan}
\email{iritani@math.kyushu-u.ac.jp}
\author[Tseng]{Hsian-Hua Tseng}
\address{Department of Mathematics\\
University of British Columbia\\
1984 Mathematics Road\\
Vancouver, B.C. V6T 1Z2\\
Canada}
\email{hhtseng@math.ubc.ca}
\newcommand{\CC}{\mathbb{C}}
\newcommand{\ZZ}{\mathbb{Z}}
\newcommand{\RR}{\mathbb{R}}
\newcommand{\Cstar}{\CC^\times}
\newcommand{\cD}{\mathcal{D}}
\newcommand{\cO}{\mathcal{O}}
\newcommand{\ev}{\mathrm{ev}}
\renewcommand{\(}{\left(}
\renewcommand{\)}{\right)}
\newcommand{\cX}{\mathcal{X}}
\newcommand{\cIX}{\mathcal{IX}}
\newcommand{\HorbX}{H^\bullet_{\text{\rm orb}}(\cX;\CC)}
\newcommand{\HorbTX}{H^\bullet_{T,\text{\rm orb}}(\cX;\CC)}
\newcommand{\HorblocTX}{H(\cX)}
\newcommand{\HTY}{H^\bullet_T(Y;\CC)}
\newcommand{\HlocTY}{H(Y)}
\newcommand{\correlator}[1]{\left \langle #1 \right \rangle}
\newlength{\mybracketspacing}
\newcommand{\VHS}{V${\infty \over 2}$HS\ }
\newcommand{\CharV}{\mathfrak{V}}
\newcommand{\fr}[1]{\left\langle #1 \right\rangle}
\theoremstyle{plain}
\newtheorem{thm}{Theorem}
\newtheorem{pro}[thm]{Proposition}
\newtheorem*{lem*}{Lemma}
\newtheorem*{cor*}{Corollary}
\theoremstyle{definition}
\newtheorem*{exa*}{Example}
\newtheorem*{dfn*}{Definition}
\newtheorem*{rem*}{Remark}
\newcommand{\R}{\mathbb{R}}
\newcommand{\C}{\mathbb{C}}
\theoremstyle{plain}
\newcommand{\cM}{\mathcal{M}}
\newcommand{\cU}{\mathcal{U}}
\newcommand{\hbeta}{\widehat{\beta}}
\def\parfrac#1#2{\frac{\partial #1}{\partial #2}}
\newcommand{\tw}{\text{\rm tw}}
\def\<{\left\langle}
\def\>{\right\rangle}
\begin{document}

\begin{abstract}
  Let $\cX$ be an orbifold with crepant resolution $Y$.  The Crepant
  Resolution Conjectures of Ruan and Bryan--Graber assert, roughly
  speaking, that the quantum cohomology of $\cX$ becomes isomorphic to
  the quantum cohomology of $Y$ after analytic continuation in certain
  parameters followed by the specialization of some of these
  parameters to roots of unity.  We prove these conjectures in the
  case where $\cX$ is a surface singularity of type $A$.  The key
  ingredient is mirror symmetry for toric orbifolds.
\end{abstract}

\maketitle

\section*{This Preprint is Obsolete}

Please note that this preprint has been superseded by {\tt
  arXiv:math/0702234v3} \cite{CCIT:lefschetz}.  The material here,
with various typos corrected, appears as Appendix A there.

\section{Introduction}

The small quantum cohomology of an orbifold $\cX$ is a family of
algebra structures on the Chen--Ruan orbifold cohomology $\HorbX$.
This family depends on so-called \emph{quantum parameters}, and encodes
certain genus-zero Gromov--Witten invariants of $\cX$.  A
long-standing conjecture of Ruan states that if $\cX$ is an orbifold
with coarse moduli space $X$ and $Y \to X$ is a crepant resolution
then the small quantum cohomology of $Y$ becomes isomorphic to the
small quantum cohomology of $\cX$ after analytic continuation in the
quantum parameters followed by specialization of some of the
parameters to roots of unity.  A refinement of this conjecture,
proposed recently by Bryan and Graber \cite{Bryan--Graber}, suggests
that if $\cX$ satisfies a Hard Lefschetz condition on orbifold
cohomology then the Frobenius manifold structures defined by the
quantum cohomology of $\cX$ and of $Y$ coincide after analytic
continuation and specialization of parameters (see also
\cite{CCIT:wallcrossings1} for a Hard Lefschetz condition).  This is a
stronger assertion: that the \emph{big} quantum cohomology of $Y$
coincides with that of $\cX$ after analytic continuation plus
specialization, via a linear isomorphism which preserves the
(orbifold) Poincar\'e pairing.  In this note we prove these
conjectures in the case where $\cX$ is the $A_{n-1}$ surface
singularity $\left[\CC^2/\mu_n\right]$ and $Y$ is its crepant
resolution.  In fact we prove a more precise statement,
Theorem~\ref{thm:maintheorem} below, which also identifies an
isomorphism and the roots of unity to which the quantum parameters of
$Y$ are specialized.  We learned this statement from Jim Bryan
\citelist{\cite{Bryan:personal} \cite{Bryan--Graber}*{Conjecture~3.1}}
and Fabio Perroni \citelist{\cite{Perroni:personal}
  \cite{Perroni}*{Conjecture~1.9}}. 

Our proof of Theorem~\ref{thm:maintheorem} is based on mirror symmetry
for toric orbifolds.  By mirror symmetry we mean the fact, first
observed by Candelas \emph{et~al.} \cite{COGP}, that one can compute
virtual numbers of rational curves in a manifold or orbifold $\cX$ ---
\emph{i.e.}  certain Gromov--Witten invariants of $\cX$ --- by solving
Picard--Fuchs equations.  Following Givental, we will formulate this
precisely as a relationship between a cohomology-valued generating
function for genus-zero Gromov--Witten invariants, called the
\emph{$J$-function of $\cX$}, and a cohomology-valued solution to the
Picard--Fuchs equations called the \emph{$I$-function of $\cX$}.  This
relationship is Proposition~\ref{pro:mirror} in
Section~\ref{sec:mirrorsymmetry}.  After describing the toric
structures of $\cX$ and $Y$ in Section~\ref{sec:toric} and fixing
notation for cohomology and quantum cohomology in
Section~\ref{sec:QC}, we explain in Section~\ref{sec:mirrorsymmetry}
how to extract the quantum products for $\cX$ and $Y$ from the
Picard--Fuchs equations.  Once we understand this,
Theorem~\ref{thm:maintheorem} follows easily: the proof is at the end
of Section~\ref{sec:mirrorsymmetry}.

A number of cases of Theorem~\ref{thm:maintheorem} were already known.
Ruan's Crepant Resolution Conjecture was established for surface
singularities of type $A_1$ and $A_2$ by Perroni \cite{Perroni}.
Theorem~\ref{thm:maintheorem} was proved in the $A_1$ case by
Bryan-Graber \cite{Bryan--Graber}, in the $A_2$ case by
Bryan--Graber--Pandharipande \cite{Bryan--Graber--Pandharipande}, and
in the $A_3$ case by Bryan--Jiang \cite{Bryan--Jiang}.  Davesh Maulik
has computed the genus-zero Gromov--Witten potential of the type $A$
surface singularity $\cX = \left[\CC^2/\mu_n\right]$ for all $n$ (as
well as certain higher-genus Gromov--Witten invariants of $\cX$) and
the reduced genus-zero Gromov--Witten potential of the crepant
resolution $Y$ \cite{Maulik}; Theorem~\ref{thm:maintheorem} should
follow from this.  The quantum cohomology of the crepant resolutions
of type $ADE$ surface singularities has been computed by
Bryan--Gholampour \cite{Bryan--Gholampour}.  Skarke \cite{Skarke} and
Hosono \cite{Hosono} have studied the $A_n$ case from a point of view
very similar to ours, as part of their investigations of homological
mirror symmetry.

\subsection*{Acknowledgements} We are grateful to Yongbin Ruan for
many productive and inspiring conversations, and to Jim Bryan, Etienne
Mann, and Fabio Perroni for useful discussions.  T.C. thanks Bong Lian
and Shing-Tung Yau for helpful conversations.  H.I. is grateful to
Akira Ishii for teaching him about autoequivalences of the derived
category.  The research on which this note is based took place at the
conference ``Quantum Cohomology of Stacks and String Theory'' at the
Institut Henri Poincar\'e.  T.C. is supported by the Royal Society and
by NSF grant DMS-0401275.  H.I. is supported by the Grant-in-Aid for
Scientific Research 18-15108 and the 21st Century COE program of
Kyushu University.  H.-H.T. thanks Institut Mittag-Leffler for
hospitality and support.

\section{$\cX$ and $Y$ as Toric Orbifolds}
\label{sec:toric}

$\cX$ is the toric orbifold corresponding to the fan\footnote{$\cX$ is
  also the toric Deligne--Mumford stack \cite{Borisov--Chen--Smith}
  corresponding to the stacky fan in Figure~\ref{fanforX}.} in
Figure~\ref{fanforX} and $Y$ is the toric manifold corresponding to
the fan in Figure~\ref{fanforY}.  Background material on toric
manifolds and orbifolds can be found in \cite[Chapter VII]{Audin}.
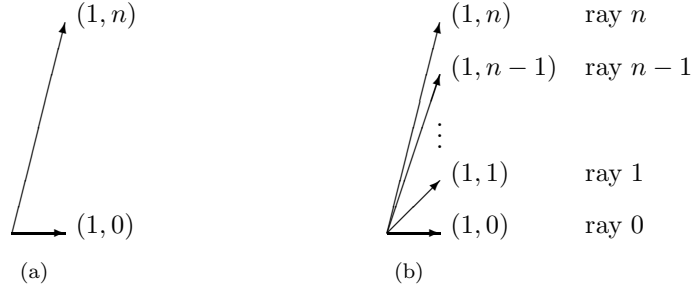
\begin{figure}
  \centering
  \subfigure[]
  {
    \label{fanforX}
    \begin{picture}(100,90)(-40,-5)
      \put(24,0){$\textstyle
        (1,0)$}
      \put(24,80){$\textstyle (1,n)$}
      \put(0,0){\vector(1,0){20}}
      \put(0,0){\vector(1,4){20}}
    \end{picture}
  }
  \hspace{1cm} 
  \subfigure[]
  {
    \label{fanforY}
    \begin{picture}(100,90)(-40,-5)
      \put(20,40){\makebox(0,0){$\vdots$}}
      \put(24,0){$\textstyle (1,0)$}
      \put(24,20){$\textstyle (1,1)$}
      \put(24,60){$\textstyle (1,n-1)$}
      \put(24,80){$\textstyle (1,n)$}
      \put(75,0){ray $0$}
      \put(75,20){ray $1$}
      \put(75,60){ray $n-1$}
      \put(75,80){ray $n$}
      \put(0,0){\vector(1,0){20}}
      \put(0,0){\vector(1,1){20}}
      \put(0,0){\vector(1,3){20}}
      \put(0,0){\vector(1,4){20}}
    \end{picture}
  }
  \caption{(a) The fan for $\cX$.   (b) The fan for $Y$.}
\end{figure}

There is an exact sequence
\[
\begin{CD}
  0 @>>> \ZZ^{n-1} @>{
  M^\mathrm{T}}>>  
  \ZZ^{n+1} @>{
    \begin{pmatrix} \textstyle
      1 & 1 & 1 & \cdots & 1 \\
      0 & 1 &  2 & \cdots & n
    \end{pmatrix}}>> \ZZ^2 @>>> 0,
\end{CD}
\]
and hence we can represent the Gale dual of the right-hand map by
\[
\begin{CD}
  \ZZ^{n+1} @>{M}>> \ZZ^{n-1},
\end{CD}
\]
where 
\[
M= 
 \begin{pmatrix}
   1 & -2 & 1 & 0 & 0 & \cdots & 0 \\
   0 & 1 & -2 & 1  & 0 & \cdots & 0 \\
   \vdots & & \ddots & & \ddots & & \vdots \\
   0 & \cdots & 0 & 1 & -2 & 1 & 0 \\
   0 & \cdots & 0 & 0 & 1 & -2 & 1
 \end{pmatrix}.
\]
Certain faces of the positive orthant $(\RR_{\geq 0})^{n+1} \subset
\R^{n+1}$ project via $M$ to codimension-$1$ subsets of $\RR^{n-1}$.
The image of the positive orthant is divided by these subsets into
chambers, which are the maximal cones of a fan in $\RR^{n-1}$ called
the \emph{secondary fan} of $Y$.  Chambers in the secondary fan
correspond to toric partial resolutions of $\cX$.  A chamber $K$
corresponds to a fan $\Sigma$ with rays some subset of the rays of the
fan for $Y$, as follows.  Number the rays of the fan for $Y$ as shown
in Figure~\ref{fanforY}.  For a subset $\sigma \subset
\{0,1,\ldots,n\}$, let us write $\bar{\sigma}$ for the complement
$\{0,1,\ldots,n\}\setminus \sigma$, $\RR^\sigma$ for the corresponding
co-ordinate subspace of $\RR^{n+1}$, and say that $\sigma$ covers $K$
iff $K \subset M(\RR^\sigma)$.  The fan $\Sigma$ corresponding to the
chamber $K$ is defined by
\[
  \sigma \in \Sigma  \iff  \text{$\bar{\sigma}$ covers $K$};
\]
the chamber $K$ corresponding to the fan $\Sigma$ is
\[
\bigcap_{\sigma \in \Sigma} M\(\RR^{\bar{\sigma}}\).
\]
We will concentrate on two chambers: $K_{\cX}$, with rays given by the
middle $n-1$ columns of $M$, and $K_Y$ with rays given by the standard
basis vectors for $\RR^{n-1}$.  $K_{\cX}$ corresponds to the toric
orbifold $\cX$ and $K_Y$ corresponds to the toric manifold $Y$.

Let $\cM_{\text{\rm sec}}$ be the toric orbifold corresponding to the
secondary fan of $Y$.  As $K_{\cX}$ and $K_Y$ are simplicial, they
give co-ordinate patches on $\cM_{\text{\rm sec}}$: the co-ordinates
$x_1,\ldots,x_{n-1}$ from $K_{\cX}$ and $y_1,\ldots,y_{n-1}$ from
$K_Y$ are related by
\begin{subequations}
  \label{eq:xtoy}
  \begin{equation}
    y_i =
    \begin{cases}
      x_1^{-2} x_2 & i=1 \\
      x_{i-1} x_i^{-2} x_{i+1} & 1 < i < n-1 \\
      x_{n-2} x_{n-1}^{-2} & i=n-1.
    \end{cases}
  \end{equation}
  More precisely, $x_1,\dots,x_{n-1}$ are multi-valued and the
  co-ordinate patch $\cM_{\rm sec}(K_\cX)$ corresponding to the cone
  $K_{\cX}$ is given by the uniformizing system:
  \[
  \cM_{\rm sec}(K_\cX) \cong \C^{n-1}/\mu_n, \quad
  (x_1,x_2,\dots,x_{n-1}) \sim (cx_1, c^2 x_2,\dots, c^{n-1}x_{n-1}) \text{
    for } c\in \mu_n.
  \]
  The \emph{$B$-model moduli space $\cM_B$} is the open subset
  $\C\times \cM_{\rm sec}(K_\cX)$ of $\C\times \cM_{\rm sec}$.  Denote
  by $x_0$ or $y_0$ the co-ordinate on the first factor $\C$ of
  $\C\times \cM_{\rm sec}$, so that
  \begin{equation}
    x_0=y_0.
  \end{equation}
\end{subequations}
We will refer to the point $(x_0,x_1,\ldots,x_{n-1}) = (0,0,\ldots,0)$
as the \emph{large-radius limit point for $\cX$} and the point
$(y_0,y_1,\ldots,y_{n-1}) = (0,0,\ldots,0)$ as the \emph{large-radius
  limit point for $Y$}.  The co-ordinates $x_i$ and $y_j$ are related
to each other by \eqref{eq:xtoy}, so that $y_0,y_1,\ldots,y_{n-1}$ are
co-ordinates on the patch $\CC \times \(\Cstar\)^{n-1} \subset \CC
\times \cM_{\rm sec}(K_\cX) = \cM_B$ where each of
$x_1,x_2,\ldots,x_{n-1}$ is non-zero.

\begin{rem*}
  In what follows the first factor of $\cM_B$, which has co-ordinates
  $x_0$ or $y_0$, will play a rather different role than the second
  factor.  The first factor will correspond under mirror symmetry to
  $H^0_{\text{\rm orb}}(\cX) \subset H^\bullet_{\text{\rm orb}}(\cX)$
  or $H^0(Y) \subset H^\bullet(Y)$, and the second factor will
  correspond to $H^2_{\text{\rm orb}}(\cX) \subset
  H^\bullet_{\text{\rm orb}}(\cX)$ or $H^2(Y) \subset H^\bullet(Y)$.
\end{rem*}

\begin{rem*}
  It would be more honest to define the $B$-model moduli space 
  as the product of $\CC$ with the open subset of $\cM_{\text{\rm
      sec}}$ on which the GKZ system associated to $Y$ is
  non-singular.  This set is slightly smaller than $\cM_B$, as it does
  not contain the discriminant locus of $W_{\cX}$ or $W_Y$ which
  appears below (in the proof of Proposition~\ref{pro:ac}).
\end{rem*}

The presentations of $\cX$ as a toric orbifold and $Y$ as a toric
variety allow us to write $\cX$ and $Y$ as quotients of open sets
$\cU_\cX, \cU_Y \subset \CC^{n+1}$ by $\(\Cstar\)^{n-1}$.  The action
of $T = (\Cstar)^2$ on $\CC^{n+1}$ given by
\begin{equation}
  \label{eq:action}
  (a_0,a_1,\ldots,a_n) \overset{(s,t)}{\longmapsto}
  (sa_0,a_1,a_2,\ldots,a_{n-1}, t a_n)
\end{equation}
descends to give $T$-actions on $\cX$, $X$, and $Y$, and the crepant
resolution $Y \to X$ is $T$-equivariant.  The $T$-fixed locus on $Y$
is the exceptional divisor.  The $T$-action on $\cX =
\left[\CC^2/\mu_n\right]$ coincides with that induced by the standard
action of $T$ on $\CC^2$, so the $T$-fixed locus on $\cX$ is the
$B\mu_n$ at the origin.  We write $H_T^\bullet(\{{\rm pt}\}) =
\CC[\lambda_1,\lambda_2]$ where $\lambda_i$ is Poincar\'e-dual to a
hyperplane in the $i$th factor of $\(\mathbf{C P}^\infty\)^ 2 \simeq
BT$.

\section{Cohomology and Quantum Cohomology}
\label{sec:QC}

We will assume familiarity with quantum orbifold cohomology, referring
the reader to \cite[Section~2]{CCLT} for a brief overview and the
original sources \cite{Chen--Ruan:GW, AGV:2} for a detailed
exposition.  We will assume also familiarity with the work of
Bryan--Graber \cite{Bryan--Graber}, and in particular with their
enhanced notion of the degree of a stable map to an orbifold (``degree in
the twisted sectors'').  Our notation will be compatible with that in
\cite{Bryan--Graber}.

The $T$-equivariant orbifold cohomology $\HorbTX$ is the
$T$-equivariant cohomology of the inertia stack $\cIX$.  $\cIX$ has
components $\cX_0$, $\cX_1$, \ldots,$\cX_{n-1}$, where
\begin{align*}
  \cX_k = \left[\(\CC^2\)^g / \mu_n \right] && \text{with $g = \exp\(2
    k \pi \sqrt{-1} / n\) \in \mu_n$.}
\end{align*}
We have
\begin{align*}
  &\cX_k = \left[ \CC^2 / \mu_n \right] &  \text{age = $0$} &&&  \text{if $k=0$,} \\
  & \cX_k = B \mu_n &  \text{age = $1$} &&&  \text{otherwise.}
\end{align*}
Let $\delta_i$ be the fundamental class of $\cX_i$, $0 \leq i < n$;
this gives a $\CC[\lambda_1,\lambda_2]$-basis for $\HorbTX$.  The
canonical involution $I$ on $\cIX$ fixes $\cX_0$ and exchanges $\cX_i$
with $\cX_{n-i}$, $1 \leq i < n$.  As $I$ is age-preserving, $\HorbX$
satisfies Hard Lefschetz
\citelist{\cite{Bryan--Graber}*{Definition~1.1} \cite{Fernandez}}.

The cone $K_Y$ is the K\"ahler cone for $Y$ and its rays determine a
basis $\gamma_1,\ldots,\gamma_{n-1}$ for $H^2(Y;\ZZ)$.  The dual basis
$\beta_1,\ldots,\beta_{n-1}$ for $H_2(Y;\ZZ)$ is \emph{positive} in
the sense of \cite[Section~1.2]{Bryan--Graber}.  If we define
$\gamma_0 = 1$ and choose lifts of $\gamma_1,\ldots, \gamma_{n-1}$ to
$T$-equivariant cohomology then $\gamma_i$, $0 \leq i < n$, is an
$\CC[\lambda_1,\lambda_2]$-basis for $\HTY$.  We choose a standard
equivariant lift of each $\gamma\in H^2(Y;\ZZ)$ in the following way.
There is a unique representation $\rho_\gamma$ of $\(\Cstar\)^{n-1}$
such that $\gamma$ is the first Chern class of the line bundle
\[
L_\gamma:=\cU_Y\times_{\rho_\gamma}\C \longrightarrow \cU_Y/(\C^\times)^{n-1} =Y.
\]
This line bundle $L_\gamma$ admits a $T$-action such that $T$ acts on
$\cU_Y$ via \eqref{eq:action} and acts trivially on the $\C$ factor, and
the lift $\gamma\in H^2_T(Y;\ZZ)$ is the $T$-equivariant first Chern
class of $L_\gamma$.  The columns of $M$, together with the action
\eqref{eq:action}, define elements $\omega_j \in H^2_T(Y;\CC)$, $0
\leq j \leq n$, where
\[
\omega_j =
\begin{cases}
  \lambda_1 + \gamma_1 & j=0 \\
  -2 \gamma_1 + \gamma_2 & j=1 \\
  \gamma_{j-1} - 2 \gamma_j + \gamma_{j+1} & 1<j<n-1 \\
  \gamma_{n-2} - 2 \gamma_{n-1} & j=n-1 \\
  \lambda_2 + \gamma_{n-1} & j=n.
\end{cases}
\]
The class $\omega_i$ is the $T$-equivariant Poincar\'{e} dual of the toric
divisor given in co-ordinates \eqref{eq:action} by $a_i=0$.  We have
\begin{align*}
  \HTY &= \CC[\lambda_1,\lambda_2,\gamma_1,\ldots,\gamma_{n-1}]/ \left
    \langle \omega_i \omega_j : i-j>1 \right \rangle.
\end{align*}

$\cX$ and $Y$ are non-compact but nonetheless, as discussed in
\cite{Bryan--Graber}, one can define (orbifold) Poincar\'e pairings on
the localized $T$-equivariant (orbifold) cohomology groups
\begin{align*}
  \HorblocTX := H^\bullet_{T,\text{\rm orb}}(\cX;\CC) \otimes
  \CC(\lambda_1,\lambda_2) 
  && \text{and} && 
  \HlocTY := H_T^\bullet(Y;\CC)\otimes \CC(\lambda_1,\lambda_2) 
\end{align*}
using the Bott residue formula.  These pairings take values in
$\CC(\lambda_1,\lambda_2)$, and are non-degenerate.  
Similarly, even though some moduli spaces of stable maps to $\cX$ or
$Y$ are non-compact, the $T$-fixed loci on these moduli spaces are
compact and so we can still define $\CC(\lambda_1,\lambda_2)$-valued
Gromov--Witten invariants of $\cX$ and $Y$ using the virtual
localization formula of Graber--Pandharipande
\cite{Graber--Pandharipande}.  For $\alpha_1,\ldots,\alpha_n \in
\HlocTY$, $\beta \in H_2(Y;\ZZ)$, and $i_1,\ldots,i_n \geq 0$, we set
\[
\correlator{\alpha_1 \psi_1^{i_1}, \ldots, \alpha_n
  \psi_n^{i_n}}^Y_\beta = 
\int_{\left[\overline{M}_{0,n}(Y,\beta)\right]^{\text{\rm vir}}} \prod_{j=1}^n \ev_j^\star
  \alpha_j \cdot \psi_j^{i_j}.
\]
Here $\psi_1,\ldots,\psi_n$ are the universal cotangent line classes
on the moduli space $\overline{M}_{0,n}(Y,\beta)$ of genus-zero
$n$-pointed stable maps to $Y$ of degree $\beta$; the integral is
defined by localization to the $T$-fixed substack, as in
\cite[Section~4]{Graber--Pandharipande} or \cite[Section~3.1]{Coates}.
Bryan and Graber define a set of \emph{effective classes in the
  orbifold Neron--Severi group} of $\cX$, and for each effective class
$\hbeta$ describe an associated moduli space
$\overline{M}_{0,n}(\cX,\hbeta)$ of genus-zero $n$-pointed stable maps
to $\cX$.  As the notation suggests, one can think of $\hbeta$ as
recording some sort of degree of a stable map to $\cX$: an effective
class consists of a non-negative integer $\hbeta(i)$ for each inertia
component $\cX_i$, $1 \leq i < n$, and a stable map in
$\overline{M}_{0,n}(\cX,\hbeta)$ carries $\hbeta(i)$ extra unordered
marked points which are constrained to map to $\cX_i$.  We write
\[
\correlator{\alpha_1 \psi_1^{i_1}, \ldots, \alpha_n
  \psi_n^{i_n}}^\cX_{\hbeta} = 
\int_{\left[\overline{M}_{0,n}(\cX,\hbeta)\right]^{\text{\rm vir}}} \prod_{j=1}^n \ev_j^\star
  \alpha_j \cdot \psi_j^{i_j},
\]
where $\alpha_1,\ldots,\alpha_n \in \HorblocTX$; $i_1,\ldots,i_n \geq
0$; $\psi_1,\ldots,\psi_n$ are the universal cotangent line classes on
$\overline{M}_{0,n}(\cX,\hbeta)$; and the integral is once again
defined by virtual localization.  These integrals are in fact certain
local Gromov--Witten invariants \cite{Chiang--Klemm--Yau--Zaslow}.

The genus-zero Gromov--Witten invariants defined here assemble to give
associative quantum products on $\HorblocTX$ and $\HlocTY$.  The small
quantum product for $\cX$ is the $\CC(\lambda_1,\lambda_2)$-algebra
defined by
\begin{equation}
  \label{eq:smallQCX}
  \delta_i \star \delta_j = \sum_{k=0}^{n-1} 
  \correlator{\delta_i,\delta_j,\delta_k}^\cX_{\widehat{0}} 
  \delta^k.
\end{equation}
Here $\widehat{0}$ is the orbifold Neron-Severi class with
$\widehat{0}(i) = 0$, $1 \leq i < n$, and $\{\delta^i\}$ denotes the basis
dual to $\{\delta_i\}$ under the orbifold Poincar\'e pairing.  The small
quantum product for $\cX$ coincides with the Chen--Ruan or orbifold
cup product \cite{Chen--Ruan:orbifold}.  The small quantum product for
$Y$ is the family of $\CC(\lambda_1,\lambda_2)$-algebras, depending on
parameters $q_1,\ldots,q_{n-1}$, defined by
\begin{equation}
  \label{eq:smallQCY}
  \gamma_i \star \gamma_j = \sum_{\beta} \sum_{k=0}^{n-1} 
  \correlator{\gamma_i,\gamma_j,\gamma_k}^Y_{\beta} 
  q_1^{d_1} \cdots q_{n-1}^{d_{n-1}} \gamma^k.
\end{equation}
The sum here is over classes $\beta = d_1 \beta_1 + \cdots + d_{n-1}
\beta_{n-1}$ with each $d_i \geq 0$, and $\{\gamma^i\}$ denotes the basis
dual to $\{\gamma_i\}$ under the Poincar\'e pairing.  It follows from the
discussion below that the right-hand side of \eqref{eq:smallQCY}
defines an analytic function of $q_1,\ldots,q_{n-1}$ in some
neighbourhood of the origin.  Ruan's conjecture asserts that the small
quantum cohomology algebra $\(\HlocTY,\star\)$ becomes isomorphic to
$\(\HorblocTX,\star\)$ after analytic continuation in the $q_i$
followed by setting the $q_i$ equal to certain roots of unity.

Bryan and Graber's refinement of Ruan's conjecture involves big
quantum cohomology.  The big quantum cohomology of $\cX$ is the family
of $\CC(\lambda_1,\lambda_2)$-algebras parametrized by $u \in
\HorblocTX$, $u = u_0 \delta_0 + u_1 \delta_1 + \cdots + u_{n-1}
\delta_{n-1}$, defined by
\begin{equation}
  \label{eq:QCX}
 \delta_i \underset{\rm big}{\star} \delta_j = \sum_{\hbeta} \sum_{k=0}^{n-1} 
  \correlator{\delta_i,\delta_j,\delta_k}^\cX_{\hbeta} 
  u_1^{\hbeta(1)} \cdots u_{n-1}^{\hbeta(n-1)} \delta^k.
\end{equation}
The sum here is over effective classes $\hbeta$.  The big quantum
cohomology of $Y$ is the family of $\CC(\lambda_1,\lambda_2)$-algebras
parametrized by $t \in \HlocTY$, $t = t_0 \gamma_0 + t_1 \gamma_1 +
\cdots + t_{n-1} \gamma_{n-1}$, defined by
\begin{equation}
  \label{eq:QCY}
  \gamma_i \underset{\rm big}{\star}  \gamma_j = \sum_{\beta} \sum_{k=0}^{n-1} 
  \correlator{\gamma_i,\gamma_j,\gamma_k}^Y_{\beta} 
  e^{d_1 t_1 + \cdots + d_{n-1} t_{n-1}} \gamma^k.
\end{equation}
The sum here is over classes $\beta = d_1 \beta_1 + \cdots + d_{n-1}
\beta_{n-1}$ with each $d_i \geq 0$.  Together with the (orbifold)
Poincar\'e pairings, the big quantum cohomology algebras define
Frobenius manifolds\footnote{These Frobenius manifolds are defined
  over the field $\CC(\lambda_1,\lambda_2)$.} based on $\HorblocTX$
and $\HlocTY$.  The Bryan--Graber version of the Crepant Resolution
Conjecture asserts that these Frobenius manifolds coincide after
analytic continuation in the $t_i$ and an appropriate
change-of-variables.  This is our main result.

\begin{thm} \label{thm:maintheorem} The big quantum products
  \eqref{eq:QCX} for $\cX$ and \eqref{eq:QCY} for $Y$ coincide after
  analytic continuation in the $t_i$, the affine-linear change-of-variables
  \[
  t_i =
  \begin{cases}
    u_0, & i=0 \\
    -{2 \pi \sqrt{-1} \over n} + \sum_{j=1}^{n-1} L_{i j} u_j,  & i>0,
  \end{cases}
  \]
  where
  \begin{align*}
    L_{ij} = {\zeta^{2 i j} \( \zeta^{-j} - \zeta^{j}\) \over n}, &&  
    \zeta = \exp\({\pi \sqrt{-1} \over n}\),
  \end{align*}
  and the linear isomorphism 
  \begin{equation}
    \label{eq:lineariso}
    \begin{aligned}
      L:\HorblocTX & \to \HlocTY \\
      \delta_0 & \mapsto \gamma_0,\\
      \delta_j & \mapsto \sum_{i=1}^{n-1} L_{i j} \gamma_i, && \qquad 1 \leq j < n.
    \end{aligned}
  \end{equation}
  Furthermore, the isomorphism \eqref{eq:lineariso} matches the
  Poincar\'e pairing on $\HlocTY$ with the orbifold Poincar\'e pairing
  on $\HorblocTX$.
\end{thm}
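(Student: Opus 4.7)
The plan is to exploit Proposition~\ref{pro:mirror}, which relates the $J$-functions of $\cX$ and $Y$ to hypergeometric $I$-functions on the two large-radius patches of the $B$-model moduli space $\cM_B$. Since the big quantum products are reconstructed from the $J$-function via Givental's formalism, it suffices to show that the $I$-function of $\cX$, expanded about the large-radius limit point for $\cX$, analytically continues to the $I$-function of $Y$, expanded about the large-radius limit point for $Y$, via the linear isomorphism $L$ and after the stated change of variables. The matching of Poincar\'e pairings will then follow from the fact that this analytic continuation arises from a symmetry of the common GKZ system.

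First I would write down explicit hypergeometric expressions for $I_\cX(x_0,\ldots,x_{n-1})$ and $I_Y(y_0,\ldots,y_{n-1})$ as GKZ-type series governed by the matrix $M$ and the secondary fan; these are multivalued solutions of the common Picard--Fuchs system on $\cM_B$. Each takes values in the appropriate (localized, $T$-equivariant, orbifold) cohomology, and is expanded in the local coordinates from Section~\ref{sec:toric} around the respective limit point. The coefficients, being hypergeometric, can be massaged into Mellin--Barnes form.

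Next, the heart of the argument: compute the analytic continuation from the $Y$-patch to the $\cX$-patch explicitly. Using the coordinate change \eqref{eq:xtoy} and a Mellin--Barnes representation of $I_Y$, one closes the contour to pick up residues at a lattice of poles indexed by the inertia components $\cX_0,\ldots,\cX_{n-1}$. The residues reassemble into a linear combination of the cohomology-valued components of $I_\cX$ with coefficients of the form $\zeta^{2ij}(\zeta^{-j}-\zeta^j)/n$, which are precisely the entries $L_{ij}$; equivalently, the transition is a finite Fourier transform along $\mu_n$, as expected from McKay correspondence. The affine shift $-2\pi\sqrt{-1}/n$ in the change of variables arises by comparing the two mirror maps at their respective limit points, where the $e^{t_i}$ are identified with monomials in the $x_j$ up to constants coming from fixed branches of logarithms. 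The condition that $L$ intertwines the two pairings then follows either from orthogonality of characters of $\mu_n$, applied directly to the formula for $L_{ij}$, or from the fact that analytic continuation preserves the natural symplectic form on Givental's Lagrangian cone.

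The main obstacle is the explicit analytic continuation computation in the second step: one must identify the correct contour, the correct set of poles, and carefully sum the resulting residues into the cohomology-valued form of $I_\cX$. It is here that the particular roots of unity $\zeta = \exp(\pi\sqrt{-1}/n)$ and the prefactor $1/n$ are forced, and it is here that a miscalculation of the integrand would spoil both the identification of $L$ and the matching of pairings. Once this step is done, Theorem~\ref{thm:maintheorem} is the immediate consequence of equating $I$-functions on both sides and invoking Proposition~\ref{pro:mirror}, since each $I$-function determines its corresponding big quantum product together with the Frobenius pairing.
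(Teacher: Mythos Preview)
Your proposal is a valid strategy, but it takes a genuinely different route from the paper's own proof.

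The paper does not analytically continue the full $I$-functions via Mellin--Barnes.  Instead it observes that the Picard--Fuchs systems \eqref{eq:PFX} and \eqref{eq:PFY} agree under the coordinate change \eqref{eq:xtoy}, so there is a single global $\cD$-module on $\cM_B$ whose characteristic variety $\CharV \subset T^\star\cM_B$ encodes, via Proposition~\ref{pro:QDE}, both quantum products.  Since $\CharV$ is a global object, matching the two quantum products reduces to matching the two systems of \emph{flat co-ordinates} $f_k(x)$ and $g_i(y)$ by analytic continuation.  This is the content of Proposition~\ref{pro:ac}, and it is carried out not by contour integrals but by realizing the flat co-ordinates as differences $\log\kappa_i - \log\kappa_{i-1}$ (respectively $\log\mu_i - \log\mu_{i-1}$) of logarithms of roots of the explicit polynomials $W_\cX(\kappa)$ and $W_Y(\mu)$; the relation $W_\cX(\kappa)=0 \Leftrightarrow W_Y(1/(x_1\kappa))=0$ then produces the connection matrix $L_{ij}$ and the affine shift $-2\pi\sqrt{-1}/n$ directly.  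The pairing statement is verified by an explicit computation of $(L^\dagger\omega_i, L^\dagger\omega_j)_\cX$ against the Cartan matrix, not via symplectic invariance.

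What each approach buys: your Mellin--Barnes route is the more standard technology and would in principle yield the full connection matrix between the $I$-functions, not just between flat co-ordinates; it also generalizes more readily to situations without a convenient polynomial $W$.  The paper's approach is lighter: by isolating the characteristic variety it reduces the problem to continuing only the rank-$n$ GKZ system (rather than the full $z$-dependent $I$-function), and the use of roots of $W_\cX$, $W_Y$ makes the analytic continuation essentially algebraic and pins down an explicit path.  Your sketch is correct in outline, but be aware that your final sentence---that matching $I$-functions immediately matches Frobenius structures---tacitly uses exactly the characteristic-variety argument the paper makes explicit via Proposition~\ref{pro:QDE}.
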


Theorem~\ref{thm:maintheorem} establishes Conjecture~3.1 in
\cite{Bryan--Graber} for the case of polyhedral and binary polyhedral
groups of type $A$, and also Conjecture~1.9 in \cite{Perroni}.  The
path along which analytic continuation is taken is described after
Proposition~\ref{pro:ac} below.

To pass from the big quantum
cohomology algebras of $\cX$ and $Y$ to the small quantum cohomology
algebras, set $u_i = 0$, $e^{t_i} = q_i$, $1 \leq i < n$.

\begin{cor*}
  The small quantum products \eqref{eq:smallQCX} for $\cX$ and
  \eqref{eq:smallQCY} for $Y$ coincide after analytic continuation in
  the $q_i$, the linear isomorphism \eqref{eq:lineariso}, and the
  specialization 
  \begin{align*}
    q_i = \exp\(-{2 \pi \sqrt{-1} \over n}\), && 1 \leq i < n.
  \end{align*}
\end{cor*}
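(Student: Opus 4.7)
The plan is to deduce the corollary by directly specializing Theorem~\ref{thm:maintheorem}. Under the affine-linear change of variables of the theorem, the codimension-$(n-1)$ slice $u_1 = u_2 = \cdots = u_{n-1} = 0$ is mapped to the slice $t_1 = t_2 = \cdots = t_{n-1} = -2 \pi \sqrt{-1}/n$. Substituting $e^{t_i} = q_i$ then yields $q_i = \exp\(-2 \pi \sqrt{-1}/n\)$, which is precisely the specialization that appears in the statement of the corollary.

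Next, I would verify that on this slice the two big quantum products restrict to the corresponding small quantum products. For $\cX$, setting $u_1 = \cdots = u_{n-1} = 0$ in \eqref{eq:QCX} leaves only the single term with $\hbeta = \widehat{0}$, since any other effective class $\hbeta$ has $\hbeta(i) \geq 1$ for some $i$ and therefore contributes a vanishing monomial $u_1^{\hbeta(1)} \cdots u_{n-1}^{\hbeta(n-1)}$; what remains is exactly \eqref{eq:smallQCX}. For $Y$, the exponential substitution $e^{t_i} = q_i$ applied to \eqref{eq:QCY} converts each factor $e^{d_1 t_1 + \cdots + d_{n-1} t_{n-1}}$ into $q_1^{d_1} \cdots q_{n-1}^{d_{n-1}}$ and thereby turns \eqref{eq:QCY} into the defining formula \eqref{eq:smallQCY} for the small quantum product. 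Neither \eqref{eq:QCX} nor \eqref{eq:QCY} involves the parameters $u_0$ or $t_0$, so the identification $t_0 = u_0$ from Theorem~\ref{thm:maintheorem} introduces no additional constraint and one may set $u_0 = t_0 = 0$ without loss of generality.

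Finally, the linear isomorphism \eqref{eq:lineariso} and its compatibility with the (orbifold) Poincar\'e pairings are transported unchanged from Theorem~\ref{thm:maintheorem}, as is the path of analytic continuation: the continuation in the $t_i$ descends via $q_i = e^{t_i}$ to an analytic continuation in the $q_i$ connecting a neighbourhood of the large-radius limit point $q_i = 0$ to the point $q_i = \exp\(-2 \pi \sqrt{-1}/n\)$. Since the entire argument is a straightforward specialization of the Frobenius-manifold statement proved above, there is no genuine obstacle here; the only content of the corollary is to translate Theorem~\ref{thm:maintheorem} into the small quantum cohomology language used in the original formulation of Ruan's conjecture.
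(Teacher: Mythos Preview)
Your proposal is correct and follows exactly the paper's approach: the paper's entire proof is the single sentence preceding the Corollary, ``To pass from the big quantum cohomology algebras of $\cX$ and $Y$ to the small quantum cohomology algebras, set $u_i = 0$, $e^{t_i} = q_i$, $1 \leq i < n$,'' and you have simply spelled out the straightforward verifications that this specialization does what is claimed.
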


\begin{rem*}
  It would be more conventional to relate the big and small quantum
  cohomology algebras of $Y$ by the change of variables $q_i = Q_i
  e^{t_i}$, $1 \leq i < n$, where $Q_i$ is an element of a formal
  power series ring (or Novikov ring) introduced to ensure convergence
  of the product (see \emph{e.g.} \cite{Cox--Katz}*{Section~8.5.1});
  the resulting product would then depend on two families of variables
  $t_0, \ldots, t_{n-1}$ and $Q_1,\ldots,Q_{n-1}$.  We will not do
  this.  Firstly this is because there are no convergence problems
  here --- the right-hand side of \eqref{eq:QCY} defines an analytic
  function of $t_0, \ldots, t_{n-1}$ on an appropriate domain --- and
  secondly our choice makes clear how the specialization $q_i = c_i$
  of quantum parameters to roots of unity arises: it just reflects the
  affine-linear identification of flat co-ordinates
  \begin{align*}
    t_i = \log c_i + \sum_{j=1}^{n-1} L_{i j} u_j,
    && 1 \leq i < n.
  \end{align*}

\end{rem*}

\section{Mirror Symmetry}
\label{sec:mirrorsymmetry}

As discussed in the Introduction, by mirror symmetry we mean the fact
that one can compute certain genus-zero Gromov--Witten invariants of
$\cX$ and $Y$ by solving Picard--Fuchs equations.  In this Section we
make this precise.  We introduce two cohomology-valued generating
functions for genus-zero Gromov--Witten invariants, called the
\emph{$J$-functions} of $\cX$ and $Y$, and two cohomology-valued
solutions to the Picard--Fuchs equations called the
\emph{$I$-functions} of $\cX$ and $Y$.  The relationship between the
$I$-functions and the $J$-functions is given in
Proposition~\ref{pro:mirror} below.  We then describe how to extract
the quantum products \eqref{eq:QCX} and \eqref{eq:QCY} from the
Picard--Fuchs equations, and finally explain how this implies
Theorem~\ref{thm:maintheorem}.

\subsection{The $I$-Function and the $J$-Function}

The $J$-function $J_\cX(u,z)$ of $\cX$ is defined to be
\[
e^{u_0/z} \( z \delta_0 + u_1 \delta_1 + \cdots + u_{n-1} \delta_{n-1}
+ \sum_{\hbeta} \sum_{k=0}^{n-1} \correlator{\delta_k \over z -
  \psi_1}^\cX_{\hbeta} u_1^{\hbeta(1)}\cdots u_{n-1}^{\hbeta(n-1)}
\delta^k \).
\]
The sum here is over effective classes $\hbeta$, and we expand ${1 /
  (z-\psi_1)}$ as $\sum_m {\psi_1^m / z^{m+1}}$.  $J_\cX(u,z)$ is a
function of $u \in \HorblocTX$, $u = u_0 \delta_0 + \cdots + u_{n-1}
\delta_{n-1}$, which takes values in $\HorblocTX \otimes
\CC(\!(z^{-1})\!)$.  It is defined and analytic in an open subset of
$\HorblocTX$ where $|u_1|,\ldots,|u_{n-1}|$ are sufficiently small;
this follows from Proposition~\ref{pro:mirror} below.

The $J$-function $J_Y(t,z)$ of $Y$ is
\[
  e^{t_0/z} e^{\(t_1\gamma_1+ \cdots + t_{n-1} \gamma_{n-1}\)/z} \( z \gamma_0 +
  \sum_{\beta} \sum_{k=0}^{n-1} \correlator{\gamma_k \over z -
    \psi_1}^Y_{\beta} e^{d_1 t_1 +\cdots + d_{n-1} t_{n-1}}
  \gamma^k \),
\]
where the sum is over $\beta = d_1 \beta_1 + \cdots + d_{n-1}
\beta_{n-1}$ with each $d_i \geq 0$.  $J_Y(t,z)$ is a function of $t
\in \HlocTY$, $t = t_0 \gamma_0 + \cdots +t_{n-1} \gamma_{n-1}$, which
takes values in $\HlocTY \otimes \CC(\!(z^{-1})\!)$. It is defined 
and analytic in an open subset of $\HlocTY$ where $\Re (t_i) \ll 0$, 
$1 \leq i < n$; this again follows from Proposition~\ref{pro:mirror}.

Given a class $\hbeta$ in the orbifold Neron--Severi group of $\cX$,
or in other words given a sequence $\hbeta(1), \ldots, \hbeta(n-1)$ of
integers, it will be convenient to set
\begin{align*}
  \hbeta(0) = -{1 \over n} \sum_{k=1}^{n-1} (n-k) \hbeta(k), &&
  \hbeta(n) = -{1 \over n} \sum_{k=1}^{n-1} k \hbeta(k), &&
  i(\hbeta) =  n \fr{ - \hbeta(n)},
\end{align*}
where $\fr{r}$ denotes the fractional part of a rational number $r$.
The $I$-function $I_\cX(x,z)$ of $\cX$ is defined to be
\[
z e^{x_0/z}\, \sum_{\hbeta} 
{1 \over  z^{\hbeta(1) + \cdots + \hbeta(n-1)}}
\prod_{\substack{r:\hbeta(0) < r \leq 0 \\ \fr{r} = \fr{\hbeta(0)}}} 
(\lambda_1 + r z)
\prod_{\substack{s:\hbeta(n) < s \leq 0 \\ \fr{s} = \fr{\hbeta(n)}}} 
(\lambda_2 + s z)
\,
{ x_1^{\hbeta(1)} \cdots x_{n-1}^{\hbeta(n-1)}
  \over
  \hbeta(1)! \cdots \hbeta(n-1)! }
\, \delta_{i(\hbeta)} ;
\]
the sum here is over effective classes $\hbeta$.  This is a function
of $x = (x_0,\ldots,x_{n-1}) \in \cM_B$, $z \in \Cstar$, and
$\lambda_1,\lambda_2 \in \CC$ which takes values in $\HorbTX$.  Each
component of $I_\cX(x,z)$ with respect to the basis $\{\delta_i\}$ is
an analytic function of $(x,z,\lambda_1,\lambda_2)$ defined in a
domain where $|x_1|,\dots,|x_n|$ are sufficiently small and $x_0,z,
\lambda_1,\lambda_2$ are arbitrary.  By taking a Laurent expansion at
$z = \infty$ we can regard $I_\cX(x,z)$ as an analytic function of
$(x,\lambda_1,\lambda_2)$ which takes values in
$\HorblocTX\otimes\C(\!(z^{-1})\!)$.
$I_\cX(x,z)$ satisfies a system of
Picard--Fuchs equations, as follows.  Define differential operators
$\beth_i = z x_i\parfrac{}{x_i}$, $1 \leq i < n$, and
\begin{align*}
\beth_0 = \lambda_1 - {1 \over n} \sum_{k=1}^{n-1} (n-k) z x_k \parfrac{}{x_k}, && 
\beth_n = \lambda_2 - {1 \over n} \sum_{k=1}^{n-1} k z x_k\parfrac{}{x_k}.
\end{align*}
Then
\begin{subequations}
  \label{eq:PFX}
  \begin{multline}
    \label{eq:PFXa}
    \( \prod_{j:\hbeta(j)>0} \prod_{m=0}^{\hbeta(j)-1} \(\beth_j - m z\) \) I_{\cX}(x,z) = \\
    x_1^{\hbeta(1)} \cdots x_{n-1}^{\hbeta(n-1)} \(
    \prod_{j:\hbeta(j)<0} \prod_{m=0}^{-\hbeta(j)-1} \(\beth_j - m z\)
    \) I_{\cX}(x,z).
  \end{multline}
  for each orbifold Neron--Severi class $\hbeta$ such that $i(\hbeta)
  = 0$, and
  \begin{equation}
    \label{eq:PFXb}
    z \parfrac{}{x_0} I_{\cX}(x,z) = I_{\cX}(x,z).
  \end{equation}
\end{subequations}

The $I$-function of $Y$ is
\[
I_Y(y,z) = 
z \,e^{y_0/z}  y_1^{\gamma_1/z} \cdots y_{n-1}^{\gamma_{n-1}/z}  
\sum_\beta 
\prod_{j=0}^{n} \textstyle {\prod_{m \leq 0} (\omega_j + m z) \over 
\prod_{m \leq D_j(\beta)} (\omega_j + m z)}
  y_1^{d_1} \cdots y_{n-1}^{d_{n-1}},
\]
where $y_i^{\gamma_i/z} = \exp\(\gamma_i\log y_i/z\)$, the sum is
over $\beta = d_1 \beta_1 + \cdots + d_{n-1} \beta_{n-1}$ with each
$d_i \geq 0$, and
\[
D_j(\beta) = 
\begin{cases}
  d_1 & j=0 \\
  -2 d_1 + d_2 & j=1 \\
  d_{j-1} - 2 d_j + d_{j+1} & 1<j<n-1 \\
  d_{n-2} - 2 d_{n-1} & j=n-1 \\
  d_{n-1} & j=n.
\end{cases}
\] 
$I_Y(y,z)$ is a multi-valued function of $y = (y_0,\ldots,y_{n-1}) \in
\cM_B$, $z \in \Cstar$, and $\lambda_1,\lambda_2 \in \CC$ which takes
values in $\HTY$.  Each component of $I_Y(y,z)$ with respect to the
basis $\{\gamma_i\}$ is a multi-valued analytic function of
$(y,z,\lambda_1,\lambda_2)$ defined in a domain where $|y_1|,\dots,
|y_{n-1}|$ are sufficiently small, $|z| >
\max(|\lambda_1|,|\lambda_2|)$, and $y_0$ is arbitrary.  By taking a
Laurent expansion at $z = \infty$ we can regard $I_Y(y,z)$ as a
multi-valued analytic function of $(y, \lambda_1,\lambda_2)$ which
takes values in $\HlocTY\otimes \CC(\!(z^{-1})\!)$.
It also satisfies a system of Picard--Fuchs equations.  Define
differential operators
\[
\daleth_j =
\begin{cases}
  \lambda_1 + z y_1 \parfrac{}{y_1} & j=0 \\
  -2 z y_1 \parfrac{}{y_1} + z y_2 \parfrac{}{y_2} & j=1 \\
  z y_{j-1} \parfrac{}{y_{j-1}} - 2 z y_j \parfrac{}{y_j} + z y_{j+1} \parfrac{}{y_{j+1}} & 1<j<n-1 \\
  z y_{n-2} \parfrac{}{y_{n-2}} - 2 z y_{n-1} \parfrac{}{y_{n-1}} & j=n-1 \\
  \lambda_2 + z y_{n-1} \parfrac{}{y_{n-1}} & j=n.
\end{cases}
\]
Then 
\begin{subequations}
  \label{eq:PFY}
  \begin{multline}
    \label{eq:PFYa}
    \( \prod_{j:D_j(\beta) >0} \prod_{m=0}^{D_j(\beta) - 1}
    \(\daleth_j - m z\)
    \) I_Y(y,z) \\
    = q_1^{d_1} \cdots q_{n-1}^{d_{n-1}} \( \prod_{j:D_j(\beta) <0}
    \prod_{m=0}^{-D_j(\beta) - 1} \(\daleth_j - m z\) \) I_Y(y,z)
  \end{multline}
  for every $\beta = d_1 \beta_1 + \cdots + d_{n-1} \beta_{n-1}$, and
  \begin{equation}
    \label{eq:PFYb}
    z \parfrac{}{y_0} I_{Y}(y,z) = I_{Y}(y,z).
  \end{equation}
\end{subequations}
The Picard--Fuchs systems \eqref{eq:PFX} for $\cX$ and \eqref{eq:PFY}
for $Y$ coincide under the co-ordinate change \eqref{eq:xtoy}.  Thus
there is a global system of Picard--Fuchs equations --- a $\cD$-module
over all of $\cM_B$ --- which gives \eqref{eq:PFX} near the
large-radius limit point for $\cX$ and \eqref{eq:PFY} near the
large-radius limit point for $Y$.  This global nature of the
Picard--Fuchs system will play a key role in what follows.

By mirror symmetry, we mean the following.

\begin{pro}  \label{pro:mirror} \

  \begin{enumerate}
  \item $I_\cX(x,z)$ and $J_\cX(u,z)$ coincide after a change of
    variables expressing $u$ in terms of $x$.
  \item $I_Y(y,z)$ and $J_Y(t,z)$ coincide after a change of
    variables expressing $t$ in terms of $y$.
  \end{enumerate}
\end{pro}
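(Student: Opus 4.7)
The plan is to deduce both statements from Givental's Lagrangian-cone formalism for genus-zero Gromov--Witten theory. The genus-zero invariants of $\cX$ and $Y$ are encoded, respectively, by Lagrangian submanifolds $\cL_{\cX} \subset \HorblocTX \otimes \CC(\!(z^{-1})\!)$ and $\cL_Y \subset \HlocTY \otimes \CC(\!(z^{-1})\!)$. Each cone is ruled by the image of its $J$-function, and Givental's reconstruction theorem ensures that any family $F(s,z)$ lying on $\cL$ whose $1/z$-expansion begins as $z\mathbf{1} + s + O(1/z)$ (with $\mathbf{1}$ the fundamental class) agrees with the $J$-function after a suitable identification of $s$ with a flat coordinate.

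For part (2), $Y$ is a smooth toric variety and $I_Y$ is the standard toric hypergeometric $I$-function associated to its fan. The classical toric mirror theorem of Givental (and Lian--Liu--Yau) asserts that $I_Y(y,z) \in \cL_Y$ for all $y$ in the domain of convergence. One then reads off the mirror map $t = t(y)$ by expanding
\[
I_Y(y,z) = z \gamma_0 + t(y) + O(1/z),
\]
and Givental's uniqueness statement yields $I_Y(y,z) = J_Y(t(y), z)$.

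For part (1), $\cX$ is a toric Deligne--Mumford stack and $I_{\cX}$ is the orbifold hypergeometric $I$-function associated to its stacky fan. Here I would invoke the toric orbifold mirror theorem proved in our companion work, which asserts $I_{\cX}(x,z) \in \cL_{\cX}$. A direct expansion of the series defining $I_{\cX}$ at $z = \infty$ yields
\[
I_{\cX}(x,z) = z \delta_0 + u(x) + O(1/z)
\]
for an explicit $\HorblocTX$-valued function $u(x)$, and Givental's criterion then gives $I_{\cX}(x,z) = J_{\cX}(u(x), z)$.

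The main technical obstacle is the toric orbifold mirror theorem underlying part (1). Its proof requires extending Givental's $J$-function formalism to orbifolds (incorporating Chen--Ruan twisted sectors and the canonical basis of $\HorbX$ indexed by box elements) and a careful analysis of the fractional hypergeometric factors in $I_{\cX}$ corresponding to the inertia components $\cX_1,\ldots,\cX_{n-1}$. Once this input is granted, the remainder of the argument --- verifying the cone criterion, computing the asymptotic expansion, and identifying the mirror map --- reduces to a direct calculation, which one can carry out uniformly using the secondary-fan picture so that parts (1) and (2) appear as the two large-radius limits of a single global $\cD$-module on $\cM_B$.
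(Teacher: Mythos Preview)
Your proposal is correct and follows essentially the same route as the paper: both parts are deduced from existing mirror theorems --- Givental's toric mirror theorem for $Y$ and the orbifold mirror theorem from the companion paper for $\cX$ --- with the identification $I=J$ read off from the asymptotic expansion $z\cdot\mathbf{1}+(\text{mirror map})+O(z^{-1})$. The paper's own proof is simply a citation to those results (equation~23 of \cite{CCIT:lefschetz} and the argument of \cite{Givental:toric}), whereas you have unpacked the Lagrangian-cone mechanism underlying them; there is no substantive difference in strategy.
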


\begin{proof}
  Part (1) is equation 23 in \cite{CCIT:lefschetz}.  To see this, set
  $t^i$ there to $x_i$, $0 \leq i<n$; $k_i$ there to $\hbeta(i)$, $1
  \leq i<n$; $\tau^i$ there to $u_i$, $0 \leq i<n$; $\lambda_1$ there
  to $\lambda_2$ here and \emph{vice versa}.  Then $I^\tw(t,z)$ there
  coincides with $I_\cX(x,z)$ here and $J^\tw(\tau,z)$ there coincides
  with $J_\cX(u,z)$ here.

  The argument that proves Theorem~0.2 in \cite{Givental:toric} also
  proves part (2) here.  Theorem~0.2 as stated only applies to compact
  semi-positive toric manifolds, but the proof applies essentially
  without change to the non-compact toric Calabi--Yau manifold $Y$.
\end{proof}

\begin{rem*}
  We learned from Bong Lian that, in unpublished work, he and
  Chien-Hao Liu have established mirror theorems for non-compact toric
  Calabi--Yau manifolds using the arguments of
  \cite{Lian--Liu--Yau:3}.  Once again, the proof for compact toric
  manifolds applies also to the non-compact toric Calabi--Yau case
  without significant change.  This gives an alternative proof of the
  second part of Proposition~\ref{pro:mirror}.
\end{rem*}

We can determine the changes of variables in
Proposition~\ref{pro:mirror} by expanding the $I$-functions and the
$J$-functions as Laurent series in $z^{-1}$.  We have
\[
J_\cX(u,z) = z + u_0 \delta_0 + u_1 \delta_1 + \cdots + u_{n-1} \delta_{n-1} +
O(z^{-1})
\]
and
\[
I_\cX(x,z) = z + f_0(x) \delta_0 + f_1(x) \delta_1 + \cdots + f_{n-1}(x) \delta_{n-1} +
O(z^{-1})
\]
where $f_0(x) = x_0$ and for $1 \leq k < n$,
\[
f_k(x) = \sum_{\substack{\text{$\hbeta$ effective}:\\ i(\hbeta) = k}} 
{\Gamma\(1-{k \over n}\) \over \Gamma\(1 + \hbeta(0)\)}
{\Gamma\({k \over n}\) \over \Gamma\(1 + \hbeta(n)\)}
{x_1^{\hbeta(1)} \cdots x_{n-1}^{\hbeta(n-1)} \over
  \hbeta(1)! \cdots \hbeta(n-1)!}.
\]
The change of variables which equates $I_\cX$ and $J_\cX$ is therefore
$u_i = f_i(x)$, $0 \leq i < n$.  As
\[
f_k(x) = x_k + \text{quadratic and higher order terms in
  $x_1,\ldots,x_{n-1}$}
\]
the functions $f_0(x),\ldots,f_{n-1}(x)$ define co-ordinates on a
neighbourhood of the large-radius limit point for $\cX$ in $\cM_B$.
We call these \emph{flat co-ordinates for $\cX$}.  Similarly, 
\[
J_Y(t,z) = z + t_0 \gamma_0 + t_1 \gamma_1 + \cdots + t_{n-1}
\gamma_{n-1} + O(z^{-1})
\]
and
\[
I_Y(y,z) = z +  g_0(y) \gamma_0 + g_1(y) \gamma_1 + \cdots + g_{n-1}(y) \gamma_{n-1} +
O(z^{-1}) 
\]
for some functions $g_0(y), \ldots, g_{n-1}(y)$ with $g_0(y) = y_0$ and for $1 \leq k < n$,
\[
g_k(y) = \log y_k + \text{single-valued analytic function of
  $y_1,\ldots,y_{n-1}$}.
\]
The change of variables which equates $I_Y$ and $J_Y$ is $t_i =
g_i(y)$, $0 \leq i < n$.  The functions $g_0(y), \ldots, g_{n-1}(y)$
define multi-valued co-ordinates on a neighbourhood of the
large-radius limit point for $Y$; these are the \emph{flat
  co-ordinates for $Y$}.  Note that the exponentiated flat
co-ordinates $\exp(g_k(y))$ are single-valued.

The $J$-functions satisfy differential equations which determine the
quantum products.

\begin{pro}
  \label{pro:QDE} \ 
  \begin{enumerate}
  \item 
    \[
    z \parfrac{}{u_i} z \parfrac{}{u_j} J_{\cX}(u,z) =
    \sum_{k=0}^{n-1} \(\delta_i \underset{\rm big}{\star}\)_j^{\phantom{j}k} 
    z \parfrac{}{u_k} J_{\cX}(u,z) 
    \]
    where $ \(\delta_i \underset{\rm big}{\star}\)_j^{\phantom{j}k} $ are the matrix
    entries of the product \eqref{eq:QCX}.
  \item 
    \[
    z \parfrac{}{t_i} z \parfrac{}{t_j} J_{Y}(t,z) =
    \sum_{k=0}^{n-1} \(\gamma_i \underset{\rm big}{\star}\)_j^{\phantom{j}k} 
    z \parfrac{}{t_k} J_{Y}(t,z) 
    \]
    where $ \(\gamma_i \underset{\rm big}{\star}\)_j^{\phantom{j}k} $ are the matrix
    entries of the product \eqref{eq:QCY}.
  \end{enumerate}
\end{pro}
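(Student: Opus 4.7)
The plan is to derive both parts from the genus-zero topological recursion relation (TRR), which in the orbifold setting is due to Abramovich--Graber--Vistoli.

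For part (1) I would first compute $z\,\partial/\partial u_i\, J_\cX(u,z)$ by term-by-term differentiation. For $i=0$ the derivative acts only through the prefactor $e^{u_0/z}$ and gives $z\,\partial/\partial u_0\, J_\cX = J_\cX$; this is essentially the string equation, and it matches the identity to be proved because $\delta_i \underset{\rm big}{\star} \delta_0 = \delta_i$. For $i \geq 1$, differentiating the monomial $u_1^{\hbeta(1)} \cdots u_{n-1}^{\hbeta(n-1)}$ brings down a factor $\hbeta(i)$ which, in Bryan--Graber's formalism, corresponds to promoting one of the $\hbeta(i)$ unordered marked points constrained to lie in $\cX_i$ to an ordered marked point carrying the insertion $\delta_i$. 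The upshot is
\[
z \frac{\partial}{\partial u_i}\, J_\cX = e^{u_0/z} \Big( z \delta_i + z \sum_{\hbeta,\, k} \correlator{\delta_i,\; \delta_k/(z-\psi_1)}^\cX_{\hbeta} \, u_1^{\hbeta(1)} \cdots u_{n-1}^{\hbeta(n-1)} \, \delta^k \Big).
\]
Applying $z\,\partial/\partial u_j$ once more produces a three-point correlator with the descendant insertion $\delta_k/(z-\psi_1)$ at the third marked point.

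The heart of the argument is to apply the genus-zero TRR to this three-point descendant. Expanding $1/(z-\psi_1) = \sum_{m\geq 0} \psi_1^m/z^{m+1}$, the TRR applied for each $m \geq 1$ breaks the descendant $\tau_m(\delta_k)$ off against the other two insertions $\delta_i, \delta_j$ as a sum of products of correlators, coupled through the dual-basis resolution $\sum_\mu \delta_\mu \otimes \delta^\mu$ at the node. Summing over $m$, the three-point series reassembles into the product of a three-point function at $z=\infty$ --- which by the definition of the big quantum product is the structure constant $(\delta_i \underset{\rm big}{\star})_j^k$ --- and a two-point descendant function, which is precisely $z\,\partial/\partial u_k\, J_\cX$. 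This yields the identity claimed in part (1).

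Part (2) is proved by the same argument for the smooth target $Y$. The only modifications are that the exponential factor $e^{(t_1 \gamma_1 + \cdots + t_{n-1}\gamma_{n-1})/z}$ in $J_Y$ is handled by the divisor equation rather than by the Bryan--Graber correspondence, and the sum runs over honest curve classes $\beta \in H_2(Y;\ZZ)$; the remainder of the derivation is identical. I expect the main obstacle to be the combinatorial bookkeeping of the Bryan--Graber unordered marked points when invoking TRR in the orbifold case, but once this is carefully unwound the derivation is mechanical. A concise alternative would be to appeal to the general Givental formalism: the $J$-function (up to the $e^{u_0/z}$ factor) pairs against flat sections of the Dubrovin connection on $\HorblocTX$, and Proposition~\ref{pro:QDE} is then simply the flatness equation $z\partial_i \cdot z\partial_j = \sum_k (\delta_i \underset{\rm big}{\star})_j^k \, z\partial_k$ applied to such a section.
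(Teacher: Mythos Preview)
Your proposal is correct and follows essentially the same approach as the paper: compute the first derivative via the (fake) Divisor Equation, differentiate again, then apply the genus-zero Topological Recursion Relations to factor the resulting three-point descendant into the big quantum structure constants times the two-point descendant. The only cosmetic differences are that the paper proves part~(2) first and declares part~(1) analogous, and cites Tseng rather than Abramovich--Graber--Vistoli for the orbifold TRR.
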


\begin{proof}
  Part (2) is well-known to experts (\emph{cf.}
  \citelist{\cite{Pandharipande}*{Proposition~2}
    \cite{Cox--Katz}*{Chapter~10}}).  By the Divisor Equation, $z
  \parfrac{}{t_i} J_{Y}(t,z)$ is equal to
  \[
  z \, e^{t_0/z} e^{\(t_1\gamma_1+ \cdots + t_{n-1} \gamma_{n-1}\)/z} 
  \( \gamma_i +
  \sum_{\beta} \sum_{k=0}^{n-1} 
  \correlator{\gamma_i,{\gamma_k \over z -
      \psi_1}}^Y_{\beta} e^{d_1 t_1 +\cdots + d_{n-1} t_{n-1}} 
  \gamma^k \)
  \]
  and $ z \parfrac{}{t_i} z \parfrac{}{t_j} J_{Y}(t,z) $ is equal to
  \[
  z^2 \, e^{t_0/z} e^{\(t_1\gamma_1+ \cdots + t_{n-1}
    \gamma_{n-1}\)/z} 
  \sum_{\beta} \sum_{k=0}^{n-1} 
  \correlator{\gamma_i,\gamma_j,{\gamma_k \over z -
      \psi_1}}^Y_{\beta} e^{d_1 t_1 +\cdots + d_{n-1} t_{n-1}} 
  \gamma^k.
  \]
  This last expression is
    \begin{multline*}
      z \, e^{t_0/z} e^{\(t_1\gamma_1+ \cdots + t_{n-1} \gamma_{n-1}\)/z} 
    \sum_{\beta} \sum_{k=0}^{n-1} 
    \correlator{\gamma_i,\gamma_j,\gamma_k}^Y_{\beta} 
    e^{d_1 t_1 +\cdots + d_{n-1} t_{n-1}} 
    \gamma^k \\
    +   z \, e^{t_0/z} e^{\(t_1\gamma_1+ \cdots + t_{n-1} \gamma_{n-1}\)/z} 
    \sum_{\beta} \sum_{k=0}^{n-1} \sum_{m \geq 1}
    \correlator{\gamma_i,\gamma_j,{\gamma_k \psi_1^m \over z^m
      }}^Y_{\beta}     e^{d_1 t_1 +\cdots + d_{n-1} t_{n-1}} 
    \gamma^k.
  \end{multline*}
  Applying the Topological Recursion Relations
  \cite[Equation~6]{Pandharipande} yields
    \begin{multline*}
      z \, e^{t_0/z} e^{\(t_1\gamma_1+ \cdots + t_{n-1} \gamma_{n-1}\)/z} 
      \sum_{\beta} \sum_{k=0}^{n-1} 
      \correlator{\gamma_i,\gamma_j,\gamma_k}^Y_{\beta} 
      e^{d_1 t_1 +\cdots + d_{n-1} t_{n-1}} 
      \gamma^k \\
    + z \, e^{t_0/z} e^{\(t_1\gamma_1+ \cdots + t_{n-1} \gamma_{n-1}\)/z} 
    \sum_{\beta} \sum_{\beta'+\beta'' = \beta} 
    \sum_{k=0}^{n-1} \sum_{l=0}^{n-1} 
    \correlator{\gamma_i,\gamma_j,\gamma^l}^Y_{\beta'} \times \\
    \correlator{\gamma_l,{\gamma_k \over z - \psi_1
      }}^Y_{\beta''}
     e^{d_1 t_1 +\cdots + d_{n-1} t_{n-1}} 
    \gamma^k
  \end{multline*}
  and this is
  \[
  \( \sum_{\beta'} 
  \sum_{l=0}^{n-1} 
  \correlator{\gamma_i,\gamma_j,\gamma^l}^Y_{\beta'}  
  e^{d_1' t_1 +\cdots + d_{n-1}' t_{n-1}} \)
  z \parfrac{}{t_l} J_{Y}(t,z),
  \]
  where the sum in parentheses is over $\beta' = d_1' \beta_1 + \cdots
  + d_{n-1}' \beta_{n-1}$ with each $d_i' \geq 0$.  This proves (2).
  The proof of (1) is essentially identical, but uses the fake Divisor
  Equation \cite[Section~2.2]{Bryan--Graber} instead of the Divisor
  Equation and the Topological Recursion Relations for orbifolds
  \cite[Section~2.5.5]{Tseng} instead of the Topological Recursion
  Relations for varieties.  
\end{proof}

\subsection{From PF to QC}

Propositions \ref{pro:mirror} and \ref{pro:QDE} together show that we
can determine the quantum products \eqref{eq:QCX} and \eqref{eq:QCY}
by looking at the differential equations satisfied by $I_\cX$ and
$I_Y$ \emph{in flat co-ordinates}:
\begin{align}
  \label{eq:QDEIX}
  z \parfrac{}{u_i} z \parfrac{}{u_j} I_{\cX}(x(u),z) &=
  \sum_{k=0}^{n-1} \(\delta_i \underset{\rm big}{\star}\)_j^{\phantom{j}k} 
  z \parfrac{}{u_k} I_{\cX}(x(u),z) \\
  \label{eq:QDEIY}
  z \parfrac{}{t_i} z \parfrac{}{t_j} I_{Y}(y(t),z) &=
  \sum_{k=0}^{n-1} \(\gamma_i \underset{\rm big}{\star}\)_j^{\phantom{j}k} 
  z \parfrac{}{t_k} I_{Y}(y(t),z) 
\end{align}
A more invariant way to say this is as follows.  Let $\lambda_1$,
$\lambda_2$ be fixed complex numbers.  If we associate to a vector
field $v = \sum v_k(y) \parfrac{}{y_k}$ on $\cM_B$ the differential
operator $\sum z v_k(y) \parfrac{}{y_k}$ then the systems of
differential equations \eqref{eq:PFX}, \eqref{eq:PFY} define a
$\cD$-module on $\cM_B$.  The characteristic variety $\CharV$ of this
$\cD$-module is a subscheme of $T^\star \cM_B$, and we can read off
the quantum products from the algebra of functions $\cO_\CharV$.
Indeed, choosing flat co-ordinates on a neighbourhood $U$ of the
large-radius limit point for $\cX$ in $\cM_B$ identifies $\cO_U$ with
analytic functions in $u_0,\ldots,u_{n-1}$ and identifies the algebra
of fiberwise-polynomial functions on $T^\star U$ with
$\cO_U[\xi_0,\ldots,\xi_{n-1}]$; here $\xi_k$ is the fiberwise-linear
function on $T^\star U$ given by $\parfrac{}{u_k}$.  The ideal
defining the characteristic variety $\CharV$ is generated by elements
\[
P(u_0,\ldots,u_{n-1},\xi_0,\ldots,\xi_{n-1},0)
\]
where $P(u_0,\ldots,u_{n-1},\xi_0,\ldots,\xi_{n-1},z)$ runs over the
set of fiberwise-polynomial functions on $T^\star U$ which depend
polynomially on $z$ and satisfy
\[
P\(u_0,\ldots,u_{n-1},z\parfrac{}{u_0},\ldots,z\parfrac{}{u_{n-1}},z\)
I_\cX(u,z) = 0.
\]
Equation \eqref{eq:QDEIX} implies that
\[
\left.\cO_\CharV \right|_U = \cO_U[\xi_0,\ldots,\xi_{n-1}]/\mathfrak{I}
\]
where the ideal $\mathfrak{I}$ is generated by
\begin{align*}
\xi_i \xi_j = 
\sum_{k=0}^{n-1} \(\delta_i \underset{\rm big}{\star}\)_j^{\phantom{j}k} \xi_k && 0 \leq i,j<n.
\end{align*}
In other words, the quantum cohomology algebra \eqref{eq:QCX} of $\cX$
is the algebra of functions $\left.\cO_\CharV \right|_U$ on the
characteristic variety $\CharV$, \emph{written in flat co-ordinates on
  $U$}.  

Similarly, choosing flat co-ordinates on a neighbourhood $V$ of the
large-radius limit point for $Y$ in $\cM_B$ identifies $\cO_V$ with
analytic functions in $t_0,\ldots,t_{n-1}$, and identifies the algebra
of fiberwise-polynomial functions on $T^\star V$ with
$\cO_V[\eta_0,\ldots,\eta_{n-1}]$ where $\eta_k$ is the
fiberwise-linear function on $T^\star V$ given by $\parfrac{}{t_k}$.
Equation \eqref{eq:QDEIY} implies that
\[
\left.\cO_\CharV \right|_V = \cO_V[\eta_0,\ldots,\eta_{n-1}]/\mathfrak{J}
\]
where the ideal $\mathfrak{J}$ is generated by
\begin{align*}
\eta_i \eta_j = 
\sum_{k=0}^{n-1} \(\gamma_i \underset{\rm big}{\star}\)_j^{\phantom{j}k} \eta_k && 0 \leq i,j<n,
\end{align*}
and so the quantum cohomology algebra \eqref{eq:QCY} of $Y$ is the
algebra of functions $\left.\cO_\CharV \right|_V$ on the
characteristic variety $\CharV$, \emph{written in flat co-ordinates on
  $V$}.

\label{sec:discussion}

The characteristic variety $\CharV$ is a global analytic object ---
$\cO_\CharV$ gives an analytic sheaf of $\cO_{\cM_B}$-algebras,
defined over all of $\cM_B$ --- so to show that the quantum cohomology
algebras of $\cX$ and of $Y$ are related by analytic continuation
followed by the change-of-variables
\[
t_i =
\begin{cases}
  u_0, & i=0 \\
  -{2 \pi \sqrt{-1} \over n} + \sum_{j=1}^{n-1} L_{i j} u_j,  & i>0
\end{cases}
\]
we just need to show that the flat co-ordinates for $\cX$ and for $Y$
are related by analytic continuation followed by the
change-of-variables
\[
g_i(y) = 
\begin{cases}
  f_0(x), & i=0 \\
  -{2 \pi \sqrt{-1} \over n} + \sum_{j=1}^{n-1} L_{i j} f_j(x),  & i>0.
\end{cases}
\]

\begin{pro} \label{pro:ac}
  There exists a path from the large-radius limit point for $Y$ to the
  large-radius limit point for $\cX$ such that the analytic
  continuation of the flat co-ordinates $g_i(y)$, $1 \leq i<n$, along
  that path satisfy
  \[
  g_i(y) = - {2 \pi \sqrt{-1} \over n} + {1 \over n} \sum_{k=1}^{n-1}
  \zeta^{2 k i} \( \zeta^{-k }- \zeta^{k}\) f_k(x),
  \]
  where $\zeta = \exp\({\pi \sqrt{-1} \over n}\)$.
\end{pro}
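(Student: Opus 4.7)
The plan is to exploit the fact that $I_\cX(x,z)$ and $I_Y(y,z)$ are two particular solutions of the \emph{same} global Picard--Fuchs $\cD$-module on $\cM_B$; this was noted in the paragraph following \eqref{eq:PFY}, where the two Picard--Fuchs systems are shown to agree under the change of variables \eqref{eq:xtoy}. The flat coordinates $f_k(x)$ and $g_i(y)$ are specific coefficients of the $z^0$-terms of these $I$-functions in their respective bases, so they are themselves flat sections of the associated connection, defined in neighbourhoods of the two large-radius limit points. Under analytic continuation from the large-radius limit point for $Y$ to that for $\cX$ along a suitable path, each $g_i$ must therefore equal an affine-linear combination of $1,f_0(x),f_1(x),\ldots,f_{n-1}(x)$, since constants are flat and the remaining $f_k$ span the space of flat sections of the correct type; the substance of the proposition lies in identifying this combination along an explicit path.

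For the explicit computation, the most transparent route is via the mirror Landau--Ginzburg description of $\cX$ and $Y$ (the potentials $W_\cX$ and $W_Y$ already alluded to in the second remark of Section~\ref{sec:toric}). One realises the components of $I_Y(y,z)$ as oscillatory integrals $\int_\Gamma e^{W_Y/z}\omega$, with $W_Y$ a Laurent polynomial in the mirror variables and $\Gamma$ ranging over a basis of Lefschetz thimbles. In the $Y$-patch this integral reproduces the series for $g_i(y)$; after the coordinate change \eqref{eq:xtoy} it becomes the corresponding oscillatory integral for $W_\cX$ in the $\cX$-patch, and the thimbles $\Gamma$ deform, by Picard--Lefschetz theory, into an explicit $\ZZ$-linear combination of thimbles adapted to the $\cX$-large-radius limit. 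The chosen path from $y=0$ to $x=0$ in $\cM_B$ determines this deformation, and reading off the coefficients of $f_k(x)$ in the resulting expansion gives the transport matrix.

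The combinatorial heart of the matter is evaluating this transport matrix explicitly. Since $f_k(x)$ contains the factor $\Gamma(1-k/n)\Gamma(k/n)=\pi/\sin(k\pi/n)=-2\pi\sqrt{-1}/(\zeta^{-k}-\zeta^{k})$ by the reflection formula, and since the discrete Fourier transform over $\mu_n$ enters through the orthogonality identity $\sum_{i=0}^{n-1}\zeta^{2ki}\zeta^{-2k'i}=n$ if $k=k'$ and $0$ otherwise, the transport coefficients reduce to precisely $\zeta^{2ki}(\zeta^{-k}-\zeta^{k})/n$. The constant $-2\pi\sqrt{-1}/n$ arises from the logarithmic monodromy of $\log y_i=\sum_jA_{ij}\log x_j$ (with $A$ the Cartan-like matrix implicit in \eqref{eq:xtoy}) around the branch locus of $y=y(x)$; the particular path is chosen so that this monodromy contributes a shift of exactly $-2\pi\sqrt{-1}/n$ to each $g_i$. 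The main obstacle is the bookkeeping of the thimble deformation through the discriminant --- equivalently, the Mellin--Barnes contour shift, if one prefers that formulation --- which requires careful attention to orientations, branch cuts, and the signs of residues of the $\Gamma$-factors; once this is sorted out, the claimed formula drops out after matching the $\zeta$-factors produced by the $\mu_n$-monodromy with the Gamma-function ratios defining $f_k(x)$.
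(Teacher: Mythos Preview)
Your opening paragraph is right and matches the paper: both $\{1,f_1,\ldots,f_{n-1}\}$ and $\{1,g_1,\ldots,g_{n-1}\}$ are bases of solutions of the same rank-$n$ GKZ system (the $\lambda_1=\lambda_2=0$ specialisation of the Picard--Fuchs system), so any analytic continuation of $g_i$ is automatically an affine-linear combination of $1,f_1,\ldots,f_{n-1}$. (Note that $f_0=x_0$ plays no role here: the flat coordinates for $i\geq 1$ are independent of $x_0,y_0$.)

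After that your route diverges from the paper's, and what you have written is a sketch rather than a proof. The paper does not use oscillatory integrals, Lefschetz thimbles, or Mellin--Barnes contours at all. It exploits a far more elementary device: the GKZ solutions can be written explicitly as differences of logarithms of the roots of two concrete one-variable polynomials
\[
W_\cX(\kappa)=\kappa^n+x_{n-1}\kappa^{n-1}+\cdots+x_1\kappa+1,\qquad
W_Y(\mu)=\mu^n+\mu^{n-1}+y_1\mu^{n-2}+y_1^2 y_2\mu^{n-3}+\cdots,
\]
which are related by $W_\cX(\kappa)=0\iff W_Y(1/(x_1\kappa))=0$ under \eqref{eq:xtoy}. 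One identifies $g_i(y)=\log\mu_i-\log\mu_{i-1}$ from the asymptotics $g_i(y)\sim\log y_i$ as $y\to 0$, chooses a path so that the root $\mu_i$ continues to $1/(x_1\kappa_i)$ (possible because the monodromy acts $n$-transitively on the roots), and obtains $g_i\to\log\kappa_{i-1}-\log\kappa_i$. The claimed formula then follows immediately from an already-established identity (equation~25 of \cite{CCIT:lefschetz}) expressing $\log\kappa_i(x)$ as $\frac{(2i+1)\pi\sqrt{-1}}{n}+\frac{1}{n}\sum_k\zeta^{(2i+1)k}f_k(x)$; in particular the constant $-2\pi\sqrt{-1}/n$ drops out of the difference of these two logarithms, not from any monodromy of $\log y_i$ in the $x$-variables as you suggest.

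Your proposal, by contrast, never actually computes the transport matrix. The third paragraph explains why the factors $\zeta^{2ki}(\zeta^{-k}-\zeta^k)/n$ are \emph{plausible} given the Gamma-function ingredients present in $f_k$, but ``once this is sorted out, the claimed formula drops out'' is precisely the step where all the work lies, and you have not done it. The Mellin--Barnes or thimble-deformation approach can in principle be carried through for GKZ systems of this type, but it is considerably heavier machinery than tracking $n$ roots of a degree-$n$ polynomial along a path, and your account does not rise above the level of an outline.
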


\begin{proof}
  The flat co-ordinates $f_1(x), \ldots, f_{n-1}(x)$ and
  $g_1(y),\ldots,g_{n-1}(y)$ are independent of $\lambda_1$,
  $\lambda_2$, $x_0$, and $y_0$, so they can be extracted from the
  $z^0$ terms of $I_\cX$ and $I_Y$ after setting $\lambda_1 =
  \lambda_2 = x_0 = y_0 = 0$.  But $\left. I_\cX \right|_{\lambda_1 =
    \lambda_2 = x_0 = 0}$ and $\left. I_Y \right|_{\lambda_1 =
    \lambda_2 = y_0 = 0}$ satisfy the systems of differential
  equations \eqref{eq:PFXa}, \eqref{eq:PFYa} with $\lambda_1$ and
  $\lambda_2$ set to zero, and once $\lambda_1$ and $\lambda_2$ are
  set to zero the $z$-dependence in these differential equations
  cancels.  The flat co-ordinates $f_1(x), \ldots, f_{n-1}(x)$ and
  $g_1(y),\ldots,g_{n-1}(y)$ therefore satisfy
  \begin{multline}
    \label{eq:GKZ}
    \(
    \prod_{j:D_j(\beta) >0} \prod_{m=0}^{D_j(\beta) - 1} \(\gimel_j - m\) 
    \) f \\
    = q_1^{d_1} \cdots q_{n-1}^{d_{n-1}} \(
    \prod_{j:D_j(\beta) <0} \prod_{m=0}^{-D_j(\beta) - 1} \(\gimel_j - m\) 
    \) f
  \end{multline}
  for every $\beta = d_1 \beta_1 + \cdots + d_{n-1} \beta_{n-1}$,
  where
  \[
  \gimel_j =
  \begin{cases}
    y_1 \parfrac{}{y_1} & j=0 \\
    -2 y_1 \parfrac{}{y_1} + y_2 \parfrac{}{y_2} & j=1 \\
    y_{j-1} \parfrac{}{y_{j-1}} - 2 y_j \parfrac{}{y_j} + y_{j+1} \parfrac{}{y_{j+1}} & 1<j<n-1 \\
    y_{n-2} \parfrac{}{y_{n-2}} - 2 y_{n-1} \parfrac{}{y_{n-1}} & j=n-1 \\
    y_{n-1} \parfrac{}{y_{n-1}} & j=n.
  \end{cases}
  \]
  This is the GKZ system associated to $Y$.  It has rank $n$, and both
  $f_1(x),\ldots,f_{n-1}(x)$ plus the constant function and
  $g_1(y),\ldots,g_{n-1}(y)$ plus the constant function form bases of
  solutions.  Any analytic continuation $\tilde{g}_i(y)$ of $g_i(y)$
  to a neighbourhood of the large-radius limit point for $\cX$ still
  satisfies \eqref{eq:GKZ}, so 
  \[
  \tilde{g}_i(y) = \sum_{j=1}^{n-1} L_{ij} f_j(x) + m_i
  \]
  for some constants $L_{ij}$ and $m_i$.  Thus any analytic
  continuation of $g_i(y)$ is an affine-linear combination of the flat
  co-ordinates $f_1(x),\ldots,f_{n-1}(x)$.  It remains to choose a
  specific analytic continuation and determine the corresponding
  constants $L_{ij}$ and $m_i$.

  We proved in \cite{CCIT:lefschetz} that another basis of solutions
  to the GKZ system \eqref{eq:GKZ} is given by the constant function
  together with
  \begin{align*}
    \log \kappa_i(x) - \log \kappa_{i-1}(x), && 1 \leq i <n,
  \end{align*}
  where $\kappa_i(x)$ are roots of the polynomial
  \[
  W_\cX(\kappa) = \kappa^n + x_{n-1} \kappa^{n-1} + x_{n-2}
  \kappa^{n-2} + \cdots + x_{1} \kappa + 1.
  \]
  We number the roots such that as $x \to 0$, 
  \begin{align*}
    \kappa_i(x) \to \zeta^{2i+1}, && 0 \leq i < n.
  \end{align*}
  Each $\log \kappa_i(x)$, $0 \leq i < n$, is also a solution to
  \eqref{eq:GKZ}.  Equation 25 in \cite{CCIT:lefschetz} gives
  \begin{equation}
    \label{eq:roottof}
    \log \kappa_i(x) = {(2i+1) \pi \sqrt{-1} \over n} + {1 \over n}
    \sum_{k=1}^{n-1} \zeta^{(2i+1) k} f_k(x).
  \end{equation}

  Consider also the polynomial
  \begin{multline*}
    W_Y(\mu) = \mu^n + \mu^{n-1} + y_1 \mu^{n-2} +
    y_1^2 y_2 \mu^{n-3} + y_1^3 y_2^2 y_3 \mu^{n-4} + \cdots
    \\
    + y_1^{n-1} y_2^{n-2} \cdots y_{n-2}^2 y_{n-1}
  \end{multline*}
  and number its roots $\mu_i(y)$, $0 \leq i<n$ such that as $y\to
  0$ 
  \begin{align*}
    \mu_0(y) & \to -1 \\
    \mu_1(y) & \sim -y_1 \\
    \mu_2(y) & \sim -y_1 y_2 \\
    & \vdots \\
    \mu_{n-1}(y) & \sim -y_1 y_2 \cdots y_{n-1}.\\
  \end{align*}
  We have $W_\cX(\kappa) = 0$ if and only if $W_Y(1/(x_1 \kappa)) = 0$,
  where $x_i$ and $y_j$ are related by \eqref{eq:xtoy}, so still
  another basis of solutions to the GKZ system \eqref{eq:GKZ} is
  \begin{align*}
    \log \mu_i(y) - \log \mu_{i-1}(y)&& 1 \leq i < n
  \end{align*}
  together with the constant function.  The solution $g_i(y)$ is
  singled out by its behaviour $g_i(y) = \log y_i +
  O(y_1,\ldots,y_{n-1})$ as $y \to 0$, so
  \[
  g_i(y) = \log \mu_i(y) - \log \mu_{i-1}(y).
  \]
  
  Along any path from the large-radius limit point for $Y$ to the
  large-radius limit point for $\cX$, the root $\mu_i(y)$ of $W_Y$
  analytically continues to the root $1/(x_1
  \kappa_{\sigma(i)}(x))$ of $W_{\cX}$, for some permutation
  $\sigma$ of $\{0,1,\ldots,n-1\}$.  The group of monodromies around
  the discriminant locus of $W_\cX$ acts $n$-transitively on the set
  of roots of $W_\cX$, so we can choose a path such that $\sigma$ is
  the identity permutation.  Along this path, $\log \mu_i(y) - \log
  \mu_{i-1}(y)$ analytically continues to $\log \kappa_{i-1}(x) -
  \log \kappa_{i}(x)$, $1 \leq i < n$.  Applying equation
  \eqref{eq:roottof} yields
  \[
    g_i(y) = -{2 \pi \sqrt{-1} \over n} + {1 \over n} \sum_{k=1}^{n-1}
  \zeta^{2 k i} \( \zeta^{-k} - \zeta^{k}\) f_k(x).
  \]
\end{proof}

\begin{rem*}
  For an explicit path satisfying the conditions in
  Proposition~\ref{pro:ac}, we can concatenate two paths defined
  as follows.  The first runs from $(y_0,y_1, \ldots, y_{n-1}) =
  (0,0,\ldots,0,0)$ to $(y_0,y_1, \ldots, y_{n-1}) = (0,1,1,\ldots,1,1)$
  and is given by $y_0 = 0$ and
  \begin{align*}
    W_Y(\mu) = \Big(\mu - \(-1 - \epsilon \rho^2 - \epsilon^2 \rho^3 -
    \ldots - \epsilon^{n-1} \rho^n\)\Big) \prod_{k=1}^{n-1} \(\mu -
    \epsilon^k \rho^{k+1}\), && 0 \leq \epsilon \leq 1,
  \end{align*}
  where $\rho = \exp\({2 \pi \sqrt{-1} \over n+1}\)$.  The second runs
  from $(x_0,x_1, \ldots, x_{n-1}) = (0,1,1,\ldots,1,1)$ to $(x_0,x_1,
  \ldots, x_{n-1}) = (0,0,\ldots,0,0)$, and is given by $x_0 = 0$ and
  \begin{align*}
    W_\cX(\kappa) = \prod_{k=0}^{n-1} \( \kappa - 
    \exp \( \pi \sqrt{-1} \left[ {2k+1 \over n} \epsilon' + {2(n-k) \over
        n+1} (1 - \epsilon') \right] \) \), &&
    0 \leq \epsilon' \leq 1.
  \end{align*}
  Note that the points $(y_0,y_1, \ldots, y_{n-1}) = (0,1,1,\ldots,1,1)$
  and $(x_0,x_1, \ldots, x_{n-1}) = (0,1,1,\ldots,1,1)$ coincide.
\end{rem*}

\subsection{The Proof of Theorem \ref{thm:maintheorem}}
 
Combining Proposition~\ref{pro:ac} with the discussion at the end of
Section~\ref{sec:discussion} shows that the quantum cohomology
algebras of $\cX$ and $Y$ coincide after analytic continuation along
the path specified in Proposition~\ref{pro:ac} followed by  the
affine-linear change-of-variables 
\begin{align*}
  t_i &=
  \begin{cases}
    u_0, & i=0 \\
    -{2 \pi \sqrt{-1} \over n} + \sum_{j=1}^{n-1} L_{i j} u_j,  & i>0,
  \end{cases}
 &  L_{ij} & = {\zeta^{2 i j} \( \zeta^{-j} - \zeta^{j}\) \over n},
\end{align*}
and the linear isomorphism 
\begin{equation*}
  \begin{aligned}
    L:\HorblocTX & \to \HlocTY \\
    \delta_0 & \mapsto \gamma_0,\\
    \delta_j & \mapsto \sum_{i=1}^{n-1} L_{i j} \gamma_i, && \qquad 1 \leq j < n.
  \end{aligned}
\end{equation*}
To see that $L$ preserves the Poincar\'e pairings, first observe that
the bases
\begin{align*}
n \lambda_1 \lambda_2, \gamma_1,\gamma_2,\ldots,\gamma_{n-1} 
&&\text{and}&&
1, \omega_1,\omega_2,\ldots,\omega_{n-1} 
\end{align*}
for $\HlocTY$ are dual with respect to the Poincar\'e pairing on
$\HlocTY$.  Let $L^\dagger$ denote the adjoint to $L$ with respect to
the Poincar\'e pairing $(\cdot,\cdot)_Y$ and the orbifold Poincar\'e
pairing $(\cdot,\cdot)_\cX$.  It suffices to show that $(L^\dagger
\gamma,L^\dagger \gamma')_\cX = (\gamma,\gamma')_Y$ for all $\gamma,
\gamma' \in \HlocTY$.  For $1 \leq i<n$, we have $(L^\dagger
\omega_i,\delta_k)_\cX = (\omega_i,L \delta_k)_Y = L_{ik}$, and so 
\begin{align*}
L^\dagger \omega_i = n \sum_{k=1}^{n-1} L_{ik} \delta_{n-k}, && 1 \leq
i<n.
\end{align*}
Also $L^\dagger 1 = \delta_0$.  Straightforward calculation now gives
$(L^\dagger 1, L^\dagger 1)_\cX = (n \lambda_1 \lambda_2)^{-1}$,
$(L^\dagger 1, L^\dagger \omega_i)_\cX = 0$ for $1 \leq i<n$, and
\begin{align*}
  (L^\dagger \omega_i, L^\dagger \omega_j)_\cX =
  \begin{cases}
    0 & \text{if $|i-j|>1$} \\
    1 & \text{if $|i-j|=1$}\\
    -2 & \text{if $i=j$}
  \end{cases}
  && \text{for $1 \leq i,j<n$.}
\end{align*}
As the class $\omega_j$ is the $T$-equivariant Poincar\'e-dual to the
$j$th exceptional divisor we see that $L^\dagger$, and hence $L$, is
pairing-preserving.  This completes the proof of
Theorem~\ref{thm:maintheorem}.  \hfill $\Box$

\begin{rem*}
  A more conceptual explanation of this result is as follows.  One can
  construct a Frobenius manifold from a \emph{variation of
    semi-infinite Hodge structure} \cite{Barannikov} (henceforth \VHS)
  together with a choice of \emph{opposite subspace}\footnote{Mirror
    symmetry often associates to the quantum cohomology of some target
    space a ``mirror family'' of manifolds.  In this case one can
    think of the \VHS as an analog of the usual variation of Hodge
    structure on the mirror family, and the opposite subspace as an
    analog of the weight filtration.}.  We have argued elsewhere that
  in certain toric examples one can construct the Frobenius manifold
  which is the ``mirror partner'' to the quantum cohomology of $Y$
  from a \VHS parameterized by the $B$-model moduli space of $Y$,
  together with a distinguished opposite subspace associated to the
  large-radius limit point for $Y$ \cite{CCIT:wallcrossings1}.  (The
  Frobenius manifold mirror to the quantum cohomology of a toric
  orbifold $\cX$ birational to $Y$ is given by the \emph{same} \VHS
  but the opposite subspace corresponding to the large-radius limit
  point for $\cX$.)  One can apply this construction here to get a
  \VHS parametrized by $\cM_B$.  This \VHS has the special property
  that the opposite subspace at the large-radius limit point for $Y$
  agrees with the opposite subspace at the large-radius limit point
  for\footnote{In fact each maximal cone of the secondary fan gives
    rise to a toric partial resolution $Y'$ of $\cX$, a large-radius
    limit point for $Y'$, and an opposite subspace corresponding to
    this large-radius limit point.  All these opposite subspaces
    agree: we really do get a Frobenius structure defined over all of
    $\cM_B$.}
  $\cX$.  In general the
  difference between the opposite subspaces at different large radius
  limit points will be measured by an element of Givental's linear
  symplectic group, but in this case the corresponding group element
  maps the opposite subspaces isomorphically to each other.  This
  means that we get a Frobenius manifold \emph{over the whole
    (non-linear) space $\cM_B$}.  One can construct flat co-ordinates
  in a neighbourhood of any point of $\cM_B$, and the transition
  functions between such flat co-ordinate patches, such as
  \[
  g_i(y) = \sum_j L_{i j} f_j(x) + \log c_i,
  \]
  are necessarily affine-linear and (Poincar\'e) metric-preserving.
\end{rem*}

\begin{rem*}
  It is clear from the proof of Proposition~\ref{pro:ac} that changing
  the path along which analytic continuation is taken will result in a
  corresponding change in the statements of
  Theorem~\ref{thm:maintheorem} and its Corollary.  Hence the
  co-ordinate change in Theorem~\ref{thm:maintheorem} is not unique.
  This ambiguity can be understood as an automorphism of quantum
  cohomology.  The orbifold fundamental group
  \[
  G := \pi_1^{\text{\rm orb}}\(\cM_B\setminus\{\text{discriminant locus of
    $W_\cX$}\}\)
  \]
  acts simply-transitively on the set of homotopy types of paths from
  the large-radius limit point for $Y$ to that for $\cX$, and in
  particular acts transitively on the set of all possible co-ordinate
  changes obtained by analytic continuation (although this action is
  not effective).  This deserves further study: we just note here the
  intriguing fact that $G$ is isomorphic to
  $\widetilde{A}_{n-1}\rtimes \mu_n$, which also appears as a subgroup
  (generated by spherical twists and line bundles) of the group of
  autoequivalences of $D_Z^b(Y)$ \cite{IU, Bridgeland, IUU}.  Here
  $\widetilde{A}_{n-1}$ is the affine braid group and $D_Z^b(Y)$ is
  the bounded derived category of coherent sheaves on $Y$ supported on
  the exceptional set $Z$.
\end{rem*}

\begin{bibdiv}
  \begin{biblist}



 \bib{AGV:2}{article}{
    author={Abramovich, Dan},
    author={Graber, Tom},
   author={Vistoli, Angelo},
   title={Gromov--Witten theory of Deligne--Mumford stacks},
   date={2006},
   eprint={arXiv:math.AG/0603151},
 }

\bib{Audin}{book}{
   author={Audin, Mich{\`e}le},
   title={Torus actions on symplectic manifolds},
   series={Progress in Mathematics},
   volume={93},
   edition={Second revised edition},
   publisher={Birkh\"auser Verlag},
   place={Basel},
   date={2004},
   pages={viii+325},
   isbn={3-7643-2176-8},
}

\bib{Barannikov}{article}{
   author={Barannikov, Serguei},
   title={Quantum periods. I. Semi-infinite variations of Hodge structures},
   journal={Internat. Math. Res. Notices},
   date={2001},
   number={23},
   pages={1243--1264},
   issn={1073-7928},
}


\bib{Borisov--Chen--Smith}{article}{
   author={Borisov, Lev A.},
   author={Chen, Linda},
   author={Smith, Gregory G.},
   title={The orbifold Chow ring of toric Deligne-Mumford stacks},
   journal={J. Amer. Math. Soc.},
   volume={18},
   date={2005},
   number={1},
   pages={193--215 (electronic)},
   issn={0894-0347},
}

\bib{Bridgeland}{article}{
   author={Bridgeland, Tom}, 
   title={Stability conditions and Kleinian singularities},
   eprint={math.AG/0508257}
}

\bib{Bryan:personal}{misc}{
   author = {Jim Bryan},
  status = {personal communication},
}

\bib{Bryan--Gholampour}{article}{
   author = {Jim Bryan},
   author = {Amin Gholampour},
   eprint = {arXiv:0707.1337},
   title = {Root Systems and the Quantum Cohomology of ADE resolutions},
}

\bib{Bryan--Graber}{article}{
   author = {Jim Bryan},
   author = {Tom Graber},
   title = {The Crepant Resolution Conjecture},
   eprint = {arXiv:math.AG/0610129},
 }

\bib{Bryan--Graber--Pandharipande}{article}{
    author = {Jim Bryan},
    author = {Tom Graber},
    author = {Rahul Pandharipande},
    title = {The orbifold quantum cohomology of $\CC^2/\ZZ_3$ and Hurwitz--Hodge
        integrals},
    eprint = {arXiv:math.AG/0510335}
}

\bib{Bryan--Jiang}{article}{
   author = {Jim Bryan},
   author = {Yunfeng Jiang},
   status = {in preparation},
}

\bib{COGP}{article}{
   author={Candelas, Philip},
   author={de la Ossa, Xenia C.},
   author={Green, Paul S.},
   author={Parkes, Linda},
   title={A pair of Calabi-Yau manifolds as an exactly soluble
   superconformal theory},
   journal={Nuclear Phys. B},
   volume={359},
   date={1991},
   number={1},
   pages={21--74},
   issn={0550-3213},
}

\bib{Chen--Ruan:orbifold}{article}{
   author={Chen, Weimin},
   author={Ruan, Yongbin},
   title={A new cohomology theory of orbifold},
   journal={Comm. Math. Phys.},
   volume={248},
   date={2004},
   number={1},
   pages={1--31},
   issn={0010-3616},
}

\bib{Chen--Ruan:GW}{article}{
   author={Chen, Weimin},
   author={Ruan, Yongbin},
   title={Orbifold Gromov--Witten theory},
   conference={
      title={Orbifolds in mathematics and physics},
      address={Madison, WI},
      date={2001},
   },
   book={
      series={Contemp. Math.},
      volume={310},
      publisher={Amer. Math. Soc.},
      place={Providence, RI},
   },
   date={2002},
   pages={25--85},
}

\bib{Chiang--Klemm--Yau--Zaslow}{article}{
   author={Chiang, T.-M.},
   author={Klemm, A.},
   author={Yau, S.-T.},
   author={Zaslow, E.},
   title={Local mirror symmetry: calculations and interpretations},
   journal={Adv. Theor. Math. Phys.},
   volume={3},
   date={1999},
   number={3},
   pages={495--565},
   issn={1095-0761},
}

\bib{Coates}{article}{
  title = {Givental's Lagrangian Cone and $S^1$--Equivariant Gromov-Witten
        Theory},
      author = {Tom Coates},
      eprint = {arXiv:math.AG/0607808},
 }

\bib{CCIT:wallcrossings1}{article}{
  title = {Wall-Crossings in Toric Gromov--Witten Theory I: Crepant
    Examples},
  author = {Tom Coates},
  author = {Alessio Corti},
  author = {Hiroshi Iritani},
  author = {Hsian-Hua Tseng},
  eprint = {arXiv:math.AG/0611550}
}

\bib{CCIT:lefschetz}{article}{
  title = {Computing Genus-Zero Twisted Gromov--Witten Invariants},  
  author = {Tom Coates},
  author = {Alessio Corti},
  author = {Hiroshi Iritani},
  author = {Hsian-Hua Tseng},
  eprint = { arXiv:math/0702234v3}
}

\bib{CCLT}{article}{
  title={The Quantum Orbifold Cohomology of Weighted Projective Space},
  author={Coates, Tom},
  author={Corti, Alessio},
  author={Lee, Yuan-Pin},
  author={Tseng, Hsian-Hua},
  eprint={arXiv:math.AG/0608481},
}


\bib{Cox--Katz}{book}{
   author={Cox, David A.},
   author={Katz, Sheldon},
   title={Mirror symmetry and algebraic geometry},
   series={Mathematical Surveys and Monographs},
   volume={68},
   publisher={American Mathematical Society},
   place={Providence, RI},
   date={1999},
   pages={xxii+469},
   isbn={0-8218-1059-6},
}
		

\bib{Fernandez}{article}{
   author={Fernandez, Javier},
   title={Hodge structures for orbifold cohomology},
   journal={Proc. Amer. Math. Soc.},
   volume={134},
   date={2006},
   number={9},
   pages={2511--2520 (electronic)},
   issn={0002-9939},
}
		

\bib{Givental:toric}{article}{
   author={Givental, Alexander B.},
   title={A mirror theorem for toric complete intersections},
   conference={
      title={Topological field theory, primitive forms and related topics
      (Kyoto, 1996)},
   },
   book={
      series={Progr. Math.},
      volume={160},
      publisher={Birkh\"auser Boston},
      place={Boston, MA},
   },
   date={1998},
   pages={141--175},
}



\bib{Graber--Pandharipande}{article}{
   author={Graber, T.},
   author={Pandharipande, R.},
   title={Localization of virtual classes},
   journal={Invent. Math.},
   volume={135},
   date={1999},
   number={2},
   pages={487--518},
   issn={0020-9910},
}
		

\bib{Hosono}{article}{
  title = {Central charges, symplectic forms, and hypergeometric series
    in local mirror symmetry},
  author = {Shinobu Hosono},
  eprint = {arXiv:hep-th/0404043},
}

\bib{IU}{article}{ 
  author={Ishii, Akira}, 
  author={Uehara, Hokuto}, 
  title={Autoequivalences of derived categories on the minimal resolutions 
         of $A_n$-singularities on surfaces}, 
  journal={J. Differential Geom.}, 
  volume={71(3)}, 
  pages={385--435}, 
  year={2005}
}

\bib{IUU}{article}{ 
  author={Ishii, Akira},
  author={Ueda, Kazushi}, 
  author={Uehara, Hokuto}, 
  title={Stability conditions on $A_n$ singularities}, 
  eprint={math.AG/0609551}
}

\bib{Lian--Liu--Yau:3}{article}{
   author={Lian, Bong H.},
   author={Liu, Kefeng},
   author={Yau, Shing-Tung},
   title={Mirror principle. III},
   journal={Asian J. Math.},
   volume={3},
   date={1999},
   number={4},
   pages={771--800},
   issn={1093-6106},
   review={\MR{1797578 (2002g:14080)}},
}

\bib{Maulik}{misc}{
  author = {Davesh Maulik},
  status = {personal communication},
}


\bib{Pandharipande}{article}{
   author={Pandharipande, Rahul},
   title={Rational curves on hypersurfaces (after A. Givental)},
   note={S\'eminaire Bourbaki. Vol.\ 1997/98},
   journal={Ast\'erisque},
   number={252},
   date={1998},
   pages={Exp.\ No.\ 848, 5, 307--340},
   issn={0303-1179},
}

\bib{Perroni}{article}{
  title ={Chen--Ruan cohomology of ADE singularities},
  author = {Fabio Perroni},
  howpublished = {Preprint Nr. 10-2006, Universitaet Zuerich},
  eprint = {arXiv:math.AG/0605207},
}

\bib{Perroni:personal}{misc}{
  author = {Fabio Perroni},
  status = {personal communication},
}

\bib{Skarke}{article}{
   author={Skarke, Harald},
   title={Non-perturbative gauge groups and local mirror symmetry},
   journal={J. High Energy Phys.},
   date={2001},
   number={11},
   pages={Paper 13, 13},
   issn={1029-8479},
}

		
 \bib{Tseng}{article}{
     author = {Hsian-Hua Tseng}, 
     title = {Orbifold Quantum Riemann--Roch, Lefschetz and Serre}, 
     eprint = {arXiv:math.AG/0506111},
 }

\end{biblist}
\end{bibdiv}

\end{document}